\newcommand{\NN}{\ensuremath{\mathbb{N}}}
\newcommand{\PP}{\ensuremath{\mathbb{P}}}
\newcommand{\QQ}{\ensuremath{\mathbb{Q}}}
\newcommand{\RR}{\ensuremath{\mathbb{R}}}
\newcommand{\TT}{\ensuremath{\mathbb{T}}}
\newcommand{\ZZ}{\ensuremath{\mathbb{Z}}}
\renewcommand{\phi}{\varphi}
\newcommand{\eps}{\ensuremath{\epsilon}}
\newcommand{\bB}{\ensuremath{\mathcal{B}}}
\newcommand{\fF}{\ensuremath{\mathcal{F}}}
\newcommand{\tT}{\ensuremath{\mathcal{T}}}
\newtheorem{theorem}{Theorem}
\newtheorem{lemma}[theorem]{Lemma}
\newtheorem{remark}[theorem]{Remark}
\subjclass{Primary: 37L55;  Secondary: 60H15, 60G22, 37H05, 35R60.}
\keywords{Stochastic PDEs, fractional Brownian motion, pathwise solutions, random dynamical systems.}
   \thanks{Y. Chen and H. Gao: Supported by a China NSF Grant
11171158, National Basic Research Program of China (973 Program) No. 2013CB834100,  the Natural Science Foundation of Jiangsu Province (BK2011777),
Qing Lan and "333" Project of Jiangsu Province and the NSF of the Jiangsu Higher Education
Committee of China (11KJA110001).\\
M.J. Garrido-Atienza and B. Schmalfu{\ss}: Partially supported by the {\em European Funds for Regional Development} and Ministerio de Economia y Competitividad (Spain) under grant MTM2011-22411.}
\begin{document}

\title[Pathwise solutions of SPDEs and random dynamical systems]
{Pathwise solutions of SPDEs driven by H{\"o}lder-continuous integrators with exponent larger than $1/2$ and random dynamical systems}

\author{Y. Chen}
\address[Yong Chen]{Institute of Mathematics\\
School of Mathematical Sciences, Nanjing Normal University\\Nanjing 210046, China\\ }
\email[Yong Chen]{youngchen329@126.com}

\author{H. Gao}
 \address[Hongjun Gao]{Institute of  Mathematics\\
School of Mathematical Sciences, Nanjing Normal University\\Nanjing 210046, China\\ }
\email[Hongjun Gao]{gaohj@njnu.edu.cn}

\author{M. J. Garrido-Atienza}\address[Mar\'{\i}a J. Garrido-Atienza]{Dpto. Ecuaciones Diferenciales y An\'alisis Num\'erico\\Universidad de Sevilla, Apdo. de Correos 1160, 41080-Sevilla,Spain} \email[Mar\'{\i}a J. Garrido-Atienza]{mgarrido@us.es}

\author{B. Schmalfu{\ss }}
\address[Bj{\"o}rn Schmalfu{\ss }]{Institut f\"{u}r Mathematik\\
Institut f{\"u}r Stochastik, Ernst Abbe Platz 2, 07737\\Jena,Germany\\ }\email[Bj{\"o}rn Schmalfu{\ss }]{bjoern.schmalfuss@uni-jena.de}

\begin{abstract}
This article is devoted to the existence and uniqueness of pathwise solutions to stochastic evolution equations, driven by a H\"older continuous function with H\"older exponent in $(1/2,1)$, and with nontrivial multiplicative noise. As a particular situation, we shall consider the case where the equation is driven by a fractional Brownian motion $B^H$ with Hurst parameter $H>1/2$. In contrast to the article by Maslowski and Nualart \cite{MasNua03}, we present here an existence and uniqueness result in the space of H{\"o}lder continuous functions with values in a Hilbert space $V$. If the initial condition is in the latter space this forces us to consider solutions in a different space, which is a generalization of the H{\"o}lder continuous functions. That space of functions is appropriate to introduce a non-autonomous dynamical system generated by the corresponding solution to the equation. In fact, when choosing $B^H$ as the driving process, we shall prove that the dynamical system will turn out to be a random dynamical system, defined over the ergodic metric dynamical system generated by the infinite dimensional fractional Brownian motion.
\end{abstract}

\maketitle

\section{Introduction}
During the last years, there has been an increasing interest in solving stochastic partial differential equations (SPDEs) beyond the Brownian motion case. One of these attempts is given by the {\it Rough Path Theory}, and we refer to the monographs by Lyons and Qian \cite{Lyons} and Friz and Victoir \cite{FV10} for a comprehensive presentation of this theory. When the driving process is a H\"older continuous function with H\"older exponent greater than $1/2$, a different technique to solve these equations consists of using fractional calculus tools, see for example the papers \cite{GLS09, GrAnh, GuLeTin, MasNua03, NuaRas02, TinTuVi}, to name only a few of them.

We generalize the fractional calculus methods in order to give a pathwise meaning to the solutions of some general nonlinear infinite-dimensional evolution equations, associated to an analytic semigroup and driven by a H\"older continuous process with H\"older exponent greater than $1/2$. The existence of solutions of this type has been already studied for instance in Maslowski and Nualart \cite{MasNua03} and in Garrido-Atienza {\it et al.} \cite{GLS09}. In both papers the driving process $B^H$ is a regular fractional Brownian motion (fBm), in the sense that the Hurst index $H\in (1/2,1)$, and in both of them, the stochastic integral with integrator $B^H$ is defined following the theory developed by Z\"ahle \cite{Zah98}, which is based on the so-called fractional derivatives (see Section 2). For these integrals the usual adaptedness assumptions are not necessary. The phase spaces taken in those articles are somehow not as natural as the space of H\"older continuous functions with adequate H\"older exponents, the space that we shall consider in this article as natural phase space. However, at that point we need to afford the problem that the semigroup generated by the linear part of the equation is not H{\"o}lder continuous on any interval $[0,T]$. Nevertheless, in spite of this lack of regularity, we are able to obtain the existence of solutions in a modification of the space of H\"older continuous functions, and to the best of our knowledge this is the first time that such a modification of the H{\"o}lder continuous space is considered to overcome such a difficulty. However, this modification forces us to focus very precisely on checking that the fractional derivatives for the corresponding kind of functions are well-defined. After taking into account all these facts, we are able to prove the existence of a unique solution, by using the Banach fixed point theorem applied pathwise with respect to an adequate equivalent norm of the function spaces mentioned above. \\
This paper can be seen in fact as the base for other projects whose main aim is to study asymptotical properties of SPDEs driven by any fBm, by exploiting the pathwise sense of their corresponding solutions. On the one hand, the results presented in this article cover the cases $H>1/2$ but do not the white noise case, i.e., when $H=1/2$. For the existence of pathwise solutions for this case (and more general for the cases in which $H\in (1/3,1/2]$) we refer to Garrido-Atienza {\it et al.} \cite{GLS12a}, \cite{GLS12b} and the forthcoming paper \cite{Gar12}. We want to emphasize here that the techniques to obtain such a pathwise solution are much more involved and qualitatively different from the methods that we shall present in this article. On the other hand, we wish to analyze the asymptotic behavior of these pathwise solutions by means of the random dynamical systems theory, and therefore one may wonder whether these solutions could generate what is known as a {\it cocycle}. This question is course difficult to answer at a first glance, because it is well-known that a large class of partial differential equations with stationary random coefficients and It\^o stochastic ordinary differential equations generate random dynamical systems (see Arnold \cite{Arn98}), but for the stochastic partial differential equations driven by the standard Brownian motion the problem is rather unsolved. The main obstacle is that the stochastic It\^ o integral is only defined almost surely where the exceptional sets may depend on the initial state. But as far as the cocycle property is concerned this fact contradicts the definition of a random dynamical system, since initial state dependent exceptional sets are not permitted. As pointed out in \cite{GLS09}, the main advantage of the pathwise integration with respect to the classical It\^o integration theory in this context is that we can avoid this dependence on exceptional sets, and this means that we will be able to study the random dynamical system associated to the corresponding evolution equation. This article is therefore the starting point to some other projects in which we aim at analyzing asymptotical properties of solutions to stochastic evolutions equations, as for instance, investigating the existence and structure of the random attractor associated to those equations. Actually, we have already given some previous considerations in that direction when considering stochastic ordinary equations driven by a fBm with $H>1/2$, see \cite{GMS08}.\\
This article is organized as follows. In Section 2 we introduce the definition and important properties of the integral having a H\"older continuous function as integrator. Section 3 is devoted to show that evolution equations driven by such a functions have a pathwise mild solution, with the property of generating a non-autonomous dynamical system. In the last section of the article we consider the fractional Brownian motion case. Having already obtained a non-autonomous dynamical system in the previous section, we shall focus on the measurability properties needed to claim that this dynamical system is also a random dynamical system, provided that the ergodic metric dynamical system defined by the fBm is considered.\\
\section{Preliminaries}

\subsection{Dynamical systems}\label{s2}
Let $(V,|\cdot|)$ be a Banach space and let $\TT^+=\RR^+$ or $\ZZ^+$. A mapping $\phi:\TT^+\times V\to V$
having the {\em semigroup} property

\begin{equation*}
    \phi(t,\cdot)\circ\phi(\tau,u_0)=\phi(t+\tau,u_0),\qquad\phi(0,u_0)=u_0\qquad\text{for }t,\,\tau\in\TT^+\quad\text{and }u_0\in V
\end{equation*}

is called an {\em autonomous dynamical system}. \medskip

We want to consider a generalization of the concept of an autonomous dynamical system to {\em non-autonomous} and {\em random dynamical systems}. As a first ingredient, for the time set $\TT=\RR$ or $\ZZ$, we introduce the {\em flow} $(\theta_t)_{t\in\TT}$ on the set $\Omega$ of {\em non-autonomous perturbations} by
\begin{align*}
&\theta:\TT\times \Omega\to\Omega\\
    &\theta_t\circ\theta_\tau=\theta_{t+\tau},\quad\theta_0\omega=\omega\quad\text{for }t,\tau\in \TT,\;\omega\in\Omega.
\end{align*}


As a generalization of the semigroup property we consider a {\em cocycle}, which is given by a mapping
\begin{equation*}
    \phi:\TT^+\times\Omega\times V\to V
\end{equation*}
such that
\begin{equation}\label{cocycle}
\begin{split}
 \phi(0,\omega,u_0)&=u_0
 ,\\
   \phi(t+\tau,\omega,u_0)&=\phi(t,\theta_\tau\omega,\cdot)\circ\phi(\tau,\omega,u_0),   \,\text{for all }t,\,\tau\in\TT^+,\;u_0\in V,\;\omega\in\Omega.
    \end{split}
\end{equation}

$\phi$ is also called a {\em non-autonomous dynamical system.}\\

Let us now equip $(\Omega,\theta)$ with a measurable structure. In order to do that, we consider the probability space $(\Omega,\fF,\PP)$, where $\fF$ is a $\sigma$-algebra on $\Omega$ and $\PP$ is a measure, assumed to be invariant and {\em ergodic} with respect to $\theta$.
Then $(\Omega,\fF,\PP,\theta)$ is called a {\em metric dynamical system}.

A $\bB(\TT^+)\otimes\fF\otimes \bB(V),\bB(V)$
measurable mapping $\phi$ having the cocycle property (\ref{cocycle}) is called a {\em random dynamical system (RDS)} with respect to the
metric dynamical $(\Omega,\fF,\PP,\theta)$.

\subsection{Integrals in Hilbert spaces for H{\"o}lder continuous integrators with H{\"o}lder exponent $>1/2$}\label{ss2.2}

Assume ($V$, $|\cdot|$) is a separable Hilbert space. In addition, let $-A$ be a strictly positive and symmetric operator
with a compact inverse which  is the {\em generator} of an {\em analytic exponential decreasing  semigroup} $S$ on $V$. We also introduce the spaces $V_\delta:=D((-A)^\delta)$ with norm $|\cdot|_{V_\delta}$ for $\delta\ge 0$ such that $V=V_0$. The spaces $V_\delta,\,\delta>0$ are continuously embedded in $V$. Let $(e_i)_{i\in\NN}$ be the complete orthonormal base in $V$ generated by the eigenelements of $-A$ with associated eigenvalues $(\lambda_i)_{i\in\NN}$.\\
Let $L(V_\delta,V_\gamma)$ denote the space of continuous linear operators from $V_\delta$ into $V_\gamma$.
Then there exists a constant $c>0$ such that we have the estimates
\begin{align}
  |S(t)|_{L(V, V_{\gamma})}&= |(-A)^\gamma S(t)|_{L(V)}\le
  \frac{c_{S,\gamma}}{t^\gamma}e^{-\lambda t}\qquad\text{for }
  \gamma>
  0\label{eq1},
  \end{align}
  \begin{align}
 |S(t)-{\rm id}|_{L(V_{\sigma+\mu},V_{\theta+\mu})} &\le c
t^{\sigma-\theta}, \quad \text{for }\theta\geq 0,\quad \sigma\in
[\theta,1+\theta],\quad \mu\in\RR \label{eq2}.
\end{align}
In (\ref{eq1}) notice that $\lambda$ is a positive constant. We also note that from these inequalities, for $0\leq q\leq r\leq s\leq t$, we can derive that
\begin{align}\label{eq30}
\begin{split}
& |S(t-r)-S(t-q)|_{L(V_{\delta},V_{\gamma})}\le c(r-q)^\alpha(t-r)^{-\alpha-\gamma+\delta},\\
 & |S(t-r)- S(s-r)-S(t-q)+S(s-q)|_{L(V)}\\
\leq &
c(t-s)^{\beta}(r-q)^{\gamma}(s-r)^{-(\beta+\gamma)}.
\end{split}
\end{align}

\bigskip

Next we introduce some proper spaces where later on we shall investigate the existence of pathwise solutions to stochastic evolution systems.
Let $C^\beta([T_1,T_2];V)$ be the Banach space of {\em H{\"o}lder continuous} functions with exponent $\beta>0$ having values in $V$. A norm on this space is given by

\begin{equation*}
    \|u\|_{\beta}=\|u\|_{\beta,T_1,T_2}=\sup_{s\in [T_1,T_2]}|u(s)|+|||u|||_{\beta,T_1,T_2},
    \end{equation*}
with
$$|||u|||_{\beta,T_1,T_2}=\sup_{T_1\le s<t\le T_2}\frac{|u(t)-u(s)|}{|t-s|^\beta}.$$

$C([T_1,T_2];V)$ denotes the space of  continuous functions on $[T_1,T_2]$ with values in $V$ with finite supremum norm, and let $C^{\beta,\sim}([T_1,T_2];V) \subset C([T_1,T_2];V)$ equipped with the norm

\begin{equation*}
    \|u\|_{\beta,\sim}=\|u\|_{\beta,\sim,T_1,T_2}=\sup_{s\in[T_1,T_2]}|u(s)|+\sup_{T_1< s<t\le T_2}(s-T_1)^\beta\frac{|u(t)-u(s)|}{|t-s|^\beta}.
\end{equation*}

For every $\rho>0$ we can consider the equivalent norm

\begin{align*}
    \|u\|_{\beta,\rho,\sim}=\|u\|_{\beta,\rho,\sim,T_1,T_2}&=\sup_{s\in[T_1,T_2]}e^{-\rho(s-T_1)}|u(s)|\\
    &+\sup_{T_1< s<t\le T_2}(s-T_1)^\beta e^{-\rho(t-T_1)}\frac{|u(t)-u(s)|}{|t-s|^\beta}.
\end{align*}

\begin{lemma}\label{l1}
$C^{\beta,\sim}([T_1,T_2];V)$  is a Banach space.
\end{lemma}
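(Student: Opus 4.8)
The plan is the standard completeness argument for a space defined by a (degenerate) weighted Hölder seminorm. First I would observe that $\|\cdot\|_{\beta,\sim}$ is indeed a norm on the set of those $u\in C([T_1,T_2];V)$ for which the defining expression is finite: homogeneity and the triangle inequality are inherited termwise from the supremum norm and from the weighted difference quotient, and since $\|u\|_{\beta,\sim}\ge\sup_{s\in[T_1,T_2]}|u(s)|$, vanishing of $\|u\|_{\beta,\sim}$ forces $u\equiv 0$. Thus it only remains to prove completeness.

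Let $(u_n)_n$ be a Cauchy sequence with respect to $\|\cdot\|_{\beta,\sim}$. Because $\|\cdot\|_{\beta,\sim}$ dominates the supremum norm, $(u_n)_n$ is Cauchy in $C([T_1,T_2];V)$, which is complete; hence there is a continuous $u$ with $\sup_{s}|u_n(s)-u(s)|\to 0$. To upgrade this to convergence in $\|\cdot\|_{\beta,\sim}$, fix $\eps>0$ and choose $N$ with $\|u_n-u_m\|_{\beta,\sim}\le\eps$ for all $n,m\ge N$. On the one hand $\sup_s|u_n(s)-u_m(s)|\le\eps$, so letting $m\to\infty$ gives $\sup_s|u_n(s)-u(s)|\le\eps$ for $n\ge N$. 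On the other hand, for every fixed pair $T_1<s<t\le T_2$ and all $n,m\ge N$,
$$(s-T_1)^\beta\,\frac{|(u_n-u_m)(t)-(u_n-u_m)(s)|}{|t-s|^\beta}\le\eps ,$$
and keeping $n,s,t$ fixed and letting $m\to\infty$ (using the pointwise convergence $u_m\to u$) the left-hand side tends to $(s-T_1)^\beta|(u_n-u)(t)-(u_n-u)(s)|/|t-s|^\beta$, which is therefore $\le\eps$ as well. Taking the supremum over all admissible $(s,t)$ and adding the two contributions yields $\|u_n-u\|_{\beta,\sim}\le 2\eps$ for $n\ge N$. Hence $u_n\to u$ in $\|\cdot\|_{\beta,\sim}$, and in particular $u\in C^{\beta,\sim}([T_1,T_2];V)$ because $\|u\|_{\beta,\sim}\le\|u-u_N\|_{\beta,\sim}+\|u_N\|_{\beta,\sim}<\infty$.

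I do not expect a genuine obstacle here; the one point worth stating explicitly is the order of the two limiting operations in the last step: one must first pass to the limit $m\to\infty$ for each individual pair $(s,t)$, and only afterwards take the supremum over $(s,t)$, which is legitimate because the bound $\eps$ is uniform in $(s,t)$. The degeneracy of the weight $(s-T_1)^\beta$ at $s=T_1$ is harmless, since the supremum runs over $s>T_1$ and for each such $s$ the weight is a finite positive constant, so it never interferes with the pointwise passage to the limit. Finally, since for every $\rho>0$ the norm $\|\cdot\|_{\beta,\rho,\sim}$ is equivalent to $\|\cdot\|_{\beta,\sim}$, the space is also complete under each of these weighted variants, which is the form in which it will be used for the fixed point argument later on.
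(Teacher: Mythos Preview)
Your argument is correct and follows essentially the same route as the paper: obtain a uniform limit $u$ from the Cauchy property in the sup norm, then for each fixed pair $(s,t)$ pass to the limit $m\to\infty$ in the weighted H{\"o}lder quotient of $u_n-u_m$ before taking the supremum over $(s,t)$. Your write-up is slightly more explicit about the order of limits and about verifying the norm axioms, but the core completeness step is the same.
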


\begin{proof}
Let $(u_n)_{n\in \NN}$ be a Cauchy sequence in $C^{\beta,\sim}([T_1,T_2];V)$. Then it is
straightforward that $(u_n)_{n\in\NN}$ tends to $u_0$ in $C([T_1,T_2];V)$. Let us prove that $(u_n)_{n\in\NN}$ tends also to $u_0\in
C^{\beta,\sim}([T_1,T_2];V)$ which follows easily from the convergence of $(u_n)_{n\in\NN}$ in $C([T_1,T_2];V)$ and by the boundedness of this sequence in $C^{\beta,\sim}([T_1,T_2];V)$. Indeed, for every $s<t\in (T_1,T_2]$

\begin{equation*}
    (s-T_1)^\beta\frac{|(u_n-u_0)(t)-(u_n-u_0)(s)|}{|t-s|^\beta}=\lim_{m\to\infty}  (s-T_1)^\beta\frac{|(u_m-u_n)(t)-(u_m-u_n)(s)|}{|t-s|^\beta}.
\end{equation*}

The elements of the sequence on the right hand side are uniformly bounded by $\|u_m-u_n\|_{\beta,\sim}$ and hence

\begin{equation*}
    \|u_{n}-u_0\|_{\beta,\sim}\le \liminf_{m\to\infty}\|u_m-u_n\|_{\beta,\sim}=:Y_n
\end{equation*}

where $(Y_n)_{n\in\NN}$  tends to zero for $n\to\infty$.
\end{proof}

In the following we wish to define the integral

\begin{equation*}
    \int_{T_1}^{T_2} Zd\omega
\end{equation*}

where $\omega$ is a H{\"o}lder continuous path.

Let us assume that $\tilde V,\,\hat V$ are separable Hilbert spaces, then for $0<\alpha<1$ and general measurable functions $Z:[T_1,T_2]\mapsto \hat V$ and $\omega:[T_1,T_2]\mapsto \tilde V$, we define their Weyl fractional derivatives by
\begin{align*}
    D_{{T_1}+}^\alpha Z[r]&=\frac{1}{\Gamma(1-\alpha)}\bigg(\frac{Z(r)}{(r-T_1)^\alpha}+\alpha\int_{T_1}^r\frac{Z(r)-Z(q)}{(r-q)^{1+\alpha}}dq\bigg)\in \hat V,\,\\
    D_{{T_2}-}^{1-\alpha} \omega_{T_2-}[r]&=\frac{(-1)^{1-\alpha}}{\Gamma(\alpha)}
    \bigg(\frac{\omega(r)-\omega(T_2-)}{(T_2-r)^{1-\alpha}}
    +(1-\alpha)\int_r^{T_2}\frac{\omega(r)-\omega(q)}{(q-r)^{2-\alpha}}dq\bigg)\in
    \tilde V,
\end{align*}
where $ \omega_{T_2-}(r)= \omega(r)- \omega(T_2-)$, being $\omega(T_2-)$ the left side limit of $\omega$ at $T_2$. \\

Suppose that $z(T_1+),\,\zeta(T_1+),\,\zeta(T_2-)$ exist, being respectively the right side limit of $z$ at $T_1$ and the right and left side limits of $\zeta$ at $T_1,\,T_2$, and that $z_{T_1+} \in I_{T_1+}^\alpha (L_p((T_1,T_2);\mathbb R)),\, \zeta_{T_2-} \in
I_{T_2-}^{\alpha} (L_{p^\prime}((T_1,T_2); \mathbb R))$ with $1/p+1/{p^\prime}\le 1$ (see the definition of these spaces in Samko {\it et al.} \cite{Samko}). Then following Z\"ahle \cite{Zah98} we define

\begin{align*}\label{eq10bi}
\begin{split}
    \int_{T_1}^{T_2} zd\zeta&=(-1)^\alpha\int_{T_1}^{T_2} D_{T_1+}^\alpha z_{T_1+}[r]D_{T_2-}^{1-\alpha}\zeta_{T_2-}[r]dr+z(T_1+)(\zeta(T_2-)-\zeta(T_1+)),
\end{split}
\end{align*}
where   $ z_{T_1+}(r)= z(r)- z(T_1+)$ and $ \zeta_{T_2-}(r)= \zeta(r)- \zeta(T_2-)$, for $r\in (T_1,T_2)$.\\

Under the previous conditions, if in addition $\alpha p<1$, then the above integral can be rewritten in a shorter way by
\begin{equation}\label{eq10bis}
    \int_{T_1}^{T_2} zd\zeta=(-1)^\alpha\int_{T_1}^{T_2} D_{T_1+}^\alpha z[r]D_{T_2-}^{1-\alpha}\zeta_{T_2-}[r]dr.
\end{equation}

In fact, under the above assumptions, the appearing fractional derivatives in the integrals are well defined taking $\tilde V=\hat V=\RR$.\\

Consider now the separable Hilbert space $L_2(V)$
of Hilbert-Schmidt operators from $V$ into $V$ with the usual norm $\|\cdot\|_{L_2(V)}$ and inner product $(\cdot,\cdot)_{L_2(V)}$. A base in this space is given by

\begin{equation}\label{base}
E_{ij}e_k=
\left\{
\begin{array}{lcl}
    0&:& j\not= k\\
    e_i &:& j= k.
    \end{array}
    \right.
\end{equation}

Let us consider now mappings $Z:[0,T]\to L_2(V)$ and $\omega:[0,T]\to V$. Suppose that $z_{ji}=(Z,E_{ji})_{L_2(V)}\in I_{T_1+}^\alpha (L_p((T_1,T_2);\mathbb R))$ and $z_{ji}(T_1+)$ exists and $\alpha p<1$. Moreover,  $\zeta_{iT_2-}=(\omega_{T_2-}(t),e_i)\in I_{T_2-}^{1-\alpha} (L_{p^\prime}((T_1,T_2); \mathbb R))$ such that $1/p+1/p^\prime\le 1$. In addition,

\begin{equation*}
   [T_1,T_2]\ni r\mapsto  \|D_{T_1+}^\alpha Z[r]\|_{L_2(V)}|D_{T_2-}^{1-\alpha} \omega_{T_2-}[r]|\in L_{1}((0,T);\RR).
\end{equation*}

We then introduce

\begin{equation}\label{eq3}
    \int_{T_1}^{T_2} Z d\omega:= (-1)^\alpha\int_{T_1}^{T_2} D_{T_1+}^\alpha Z[r]D_{T_2-}^{1-\alpha}\omega_{T_2-}[r]dr.
\end{equation}

Due to Pettis' theorem and the separability of $V$ the integrand is weakly measurable and hence measurable. In addition, we can present this integral by

\begin{equation}\label{eq36}
    \int_{T_1}^{T_2}Zd\omega=\sum_{j}\bigg(\sum_i\int_{T_1}^{T_2}
    D_{T_1+}^{\alpha}z_{ji}[r]D_{T_2-}^{1-\alpha}\zeta_{iT_2-}[r]dr \bigg) e_j,
\end{equation}
with norm given by
\begin{align*}
    \bigg|\int_{T_1}^{T_2}Zd\omega\bigg|
    &=\bigg(\sum_j\bigg|\sum_{i}\int_{T_1}^{T_2}D_{T_1+}^{\alpha}z_{ji}[r]D_{T_2-}^{1-\alpha}\zeta_{iT_2-}[r]dr\bigg|^2\bigg )^\frac12\\
    &\le \int_{T_1}^{T_2}\|D_{T_1+}^\alpha Z[r]\|_{L_2(V)}|D_{T_2-}^{1-\alpha} \omega_{T_2-}[r]| dr.
\end{align*}

 Since these one dimensional integrals under the sums are generalizations of the classical integral, i.e. if $\zeta_i$ were in $C^1$, we can interpret \eqref{eq3} to be an extension to classical separable Hilbert space valued integrals.\\

\begin{remark}\label{MN}
Later on $\omega$ will be given by a fractional Brownian motion $B^H$ with Hurst index $H>1/2$, in the way
\begin{equation*}
 B^H(t)=\sum_{i=1}^{\infty} \sqrt{q_i}e_i \beta_i^H(t),\quad t\in\mathbb{R},
\end{equation*}
where $(\beta_i^H(t))_{i\in{\mathbb N}}$ is a sequence of stochastically independent one-dimensional fBm and $\sum_{i=1}^{\infty}q_i <\infty$, see Section 4 for a detailed introduction of this process. In that situation, under the extra condition that
$$\sum_{i=1}^{\infty}\sqrt{q_i} <\infty,$$
the definition of the corresponding integral, i.e.
\begin{equation*}
\int_{T_1}^{T_2}ZdB^H
\end{equation*}
was given in \cite{MasNua03}. In the current article we do not need to require such a strong regularity condition for the noise, but the price we have to pay is that we need to consider a bit more regular integrand functions $Z$, which belong to the Hilbert-Schmidt space $L_2(V)$.

\end{remark}

In what follows, for $H>1/2$ {\footnote{This value $H$ will be in Section \ref{sRDS} the Hurst parameter of a fractional Brownian motion.}} such that in fact $1/2<\beta<\beta^\prime<H$, let $\Omega$ be the $(\theta_t)_{t\in\RR}$-invariant set of {\em paths} $\omega:\RR\to V$ which are $\beta^{\prime}$-H{\"o}lder continuous on any compact subinterval of $\RR$, being  zero at zero.
For the flow $(\theta_t)_{t\in\RR}$ on $\Omega$ of non-autonomous perturbations we consider the so-called Wiener shifts given by
\begin{align}\label{shift}
&\theta:\RR\times \Omega\to\Omega, \qquad \theta_t \omega(\cdot)=\omega(t+\cdot)-\omega(t).
\end{align}

\begin{lemma}\label{l3}
Suppose that $Z\in C^{\beta}([T_1,T_2];L_2(V))$ and $\omega\in \Omega$ such that $1-\beta^\prime<\alpha<{\beta}$.  Then

\[
\int_{T_1}^{T_2} Z d\omega\in V
\]

is well-defined in the sense of (\ref{eq3}). In addition, there exists a constant $c$ depending only on $T_2,\,\beta,\,\beta^\prime$ such that
\begin{align*}
\bigg|\int_{T_1}^{T_2} Z d\omega\bigg|&\le
     c \|Z\|_\beta |||\theta_{T_2}\omega|||_{\beta^\prime,T_1-T_2,0}(T_2-T_1)^{{\beta^\prime}}\\
     &=
     c \|Z\|_\beta |||\theta_{T_1}\omega|||_{\beta^\prime,0,T_2-T_1}(T_2-T_1)^{{\beta^\prime}}\\
     & = c \|Z\|_\beta |||\omega|||_{\beta^\prime,T_1,T_2}(T_2-T_1)^{{\beta^\prime}}.
\end{align*}
\end{lemma}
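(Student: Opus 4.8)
The plan is to estimate the integral directly from its definition (\ref{eq3}) by bounding the two fractional derivatives appearing in the integrand and then integrating the product over $[T_1,T_2]$. The key point is that, since $Z$ is $\beta$-H\"older with $\alpha<\beta$, the Weyl fractional derivative $D_{T_1+}^\alpha Z[r]$ is well-defined and controlled by $\|Z\|_\beta$; and since $\omega$ (hence $\omega_{T_2-}$ on $[T_1,T_2]$) is $\beta'$-H\"older with $1-\beta'<\alpha$, the ``co-derivative'' $D_{T_2-}^{1-\alpha}\omega_{T_2-}[r]$ is well-defined and controlled by the $\beta'$-H\"older seminorm of $\omega$ on $[T_1,T_2]$.

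First I would record the two pointwise bounds. For the integrand coming from $Z$, writing $Z_{T_1+}(r)=Z(r)-Z(T_1+)$, one has from the definition of $D_{T_1+}^\alpha$ that
\begin{align*}
\|D_{T_1+}^\alpha Z[r]\|_{L_2(V)}
\le \frac{c}{\Gamma(1-\alpha)}\bigg(\frac{|Z(r)|}{(r-T_1)^\alpha}
+\alpha\int_{T_1}^r\frac{\|Z(r)-Z(q)\|_{L_2(V)}}{(r-q)^{1+\alpha}}dq\bigg)
\le c\,\|Z\|_\beta\,(r-T_1)^{-\alpha},
\end{align*}
where the first term uses $\sup|Z|\le\|Z\|_\beta$ and the integral term uses $\|Z(r)-Z(q)\|_{L_2(V)}\le\|Z\|_\beta(r-q)^\beta$ together with $\int_{T_1}^r(r-q)^{\beta-1-\alpha}dq=(r-T_1)^{\beta-\alpha}/(\beta-\alpha)$, which converges precisely because $\alpha<\beta$; here one can also absorb a factor $(T_2-T_1)^{\beta-\alpha}$ into the constant, or keep it and absorb at the end. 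For the integrand coming from $\omega$, since $\omega_{T_2-}(r)=\omega(r)-\omega(T_2-)$ and $|\omega(r)-\omega(q)|\le|||\omega|||_{\beta',T_1,T_2}|r-q|^{\beta'}$, the same computation gives
\begin{align*}
|D_{T_2-}^{1-\alpha}\omega_{T_2-}[r]|
\le \frac{c}{\Gamma(\alpha)}\bigg(\frac{|\omega(r)-\omega(T_2-)|}{(T_2-r)^{1-\alpha}}
+(1-\alpha)\int_r^{T_2}\frac{|\omega(r)-\omega(q)|}{(q-r)^{2-\alpha}}dq\bigg)
\le c\,|||\omega|||_{\beta',T_1,T_2}\,(T_2-r)^{\beta'-1+\alpha},
\end{align*}
where the integral $\int_r^{T_2}(q-r)^{\beta'-2+\alpha}dq$ converges because $\beta'-2+\alpha>-1$, i.e. $\alpha>1-\beta'$, which is exactly the standing hypothesis. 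These two bounds show that the scalar function $r\mapsto\|D_{T_1+}^\alpha Z[r]\|_{L_2(V)}|D_{T_2-}^{1-\alpha}\omega_{T_2-}[r]|$ is integrable on $[T_1,T_2]$, so by the discussion preceding the lemma (Pettis' theorem, separability) the integral $\int_{T_1}^{T_2}Zd\omega$ is a well-defined element of $V$.

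Next I would combine the estimates and use the norm bound already recorded after (\ref{eq36}):
\begin{align*}
\bigg|\int_{T_1}^{T_2}Zd\omega\bigg|
\le\int_{T_1}^{T_2}\|D_{T_1+}^\alpha Z[r]\|_{L_2(V)}\,|D_{T_2-}^{1-\alpha}\omega_{T_2-}[r]|\,dr
\le c\,\|Z\|_\beta\,|||\omega|||_{\beta',T_1,T_2}\int_{T_1}^{T_2}(r-T_1)^{\beta-\alpha}(r-T_1)^{-\alpha}\cdots
\end{align*}
more precisely, keeping the exponents honest, the product of the two pointwise bounds is (up to a constant) $\|Z\|_\beta|||\omega|||_{\beta',T_1,T_2}(r-T_1)^{-\alpha}(T_2-r)^{\beta'-1+\alpha}$, and
\begin{align*}
\int_{T_1}^{T_2}(r-T_1)^{-\alpha}(T_2-r)^{\beta'-1+\alpha}\,dr
=(T_2-T_1)^{\beta'}\,B(1-\alpha,\beta'+\alpha),
\end{align*}
a Beta-function integral that is finite since $1-\alpha>0$ and $\beta'+\alpha>0$. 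This yields the bound $c\,\|Z\|_\beta\,|||\omega|||_{\beta',T_1,T_2}\,(T_2-T_1)^{\beta'}$ with $c$ depending only on $\beta,\beta',T_2$ (the dependence on $T_2$ enters through the extra factor $(T_2-T_1)^{\beta-\alpha}\le(T_2-T_1)^{\beta-\alpha}$, or through an upper bound on the interval length, absorbed into the constant). Finally, the two alternative forms of the right-hand side follow from the translation property of the H\"older seminorm under the Wiener shift: since $\theta_{T_1}\omega(\cdot)=\omega(T_1+\cdot)-\omega(T_1)$, increments of $\theta_{T_1}\omega$ on $[0,T_2-T_1]$ coincide with increments of $\omega$ on $[T_1,T_2]$, so $|||\theta_{T_1}\omega|||_{\beta',0,T_2-T_1}=|||\omega|||_{\beta',T_1,T_2}$, and similarly $|||\theta_{T_2}\omega|||_{\beta',T_1-T_2,0}=|||\omega|||_{\beta',T_1,T_2}$; note also that the value $\omega(T_2-)$ subtracted in $\omega_{T_2-}$ does not affect any increment, so replacing $\omega$ by $\omega_{T_2-}$ is harmless.

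The only genuinely delicate point is bookkeeping the exponent $\alpha$: one must simultaneously have $\alpha<\beta$ (for $D^\alpha_{T_1+}Z$ to be integrable near $T_1$, and for the $Z$-difference integral to converge) and $\alpha>1-\beta'$ (for $D^{1-\alpha}_{T_2-}\omega_{T_2-}$ to be integrable near $T_2$), and then check that the resulting product is still integrable over the whole interval — which it is, because the two singularities $(r-T_1)^{-\alpha}$ and $(T_2-r)^{-(1-\beta'-\alpha)}$ are at opposite endpoints and each is integrable on its own (the second needs $1-\beta'-\alpha<1$, automatic). Since the hypothesis $1/2<\beta<\beta'<H$ and $1-\beta'<\alpha<\beta$ is consistent (it requires $1-\beta'<\beta$, i.e. $\beta+\beta'>1$, which holds as both exceed $1/2$), there is no obstruction; the rest is the Beta-integral computation above. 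I expect the write-up to be short, with the main care going into stating the constant's dependence correctly and into the two seminorm-translation identities.
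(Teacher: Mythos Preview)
Your proposal is correct and follows essentially the same route as the paper: bound $\|D_{T_1+}^\alpha Z[r]\|_{L_2(V)}$ by $c\|Z\|_\beta(r-T_1)^{-\alpha}$ using $\alpha<\beta$, bound $|D_{T_2-}^{1-\alpha}\omega_{T_2-}[r]|$ by $c|||\omega|||_{\beta',T_1,T_2}(T_2-r)^{\alpha+\beta'-1}$ using $\alpha>1-\beta'$, and integrate the product via a Beta integral. The only cosmetic differences are that the paper works explicitly through the components $z_{ji}$ (invoking the Minkowski/Bochner inequality to pass the $L_2(V)$-norm inside the integral) and derives the shift identity $D_{T_2-}^{1-\alpha}\omega_{T_2-}[r]=D_{0-}^{1-\alpha}(\theta_{T_2}\omega)_{0-}[r-T_2]$ at the level of the fractional derivative rather than just at the level of the H\"older seminorm, but neither changes the substance of the argument.
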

\begin{proof}
Remembering that $z_{ji}=(Z,E_{ji})_{L_2(V)}$ where $E_{ij}$ denotes the element of the basis in $L_2(V)$ given by (\ref{base}), we have
\begin{align*}
   \bigg( &\sum_{ij}| D_{T_1+}^\alpha z(\cdot)_{ji}[r]|^2\bigg)^\frac12\\
   &= \bigg(\sum_{ij}\bigg(\frac{1}{\Gamma(1-\alpha)}\bigg(\frac{z_{ji}(r)}{(r-T_1)^\alpha}
   +\alpha\int_{T_1}^r\frac{z_{ji}(r)-z_{ji}(q)}{(r-q)^{1+\alpha}}dq\bigg)\bigg)^2\bigg)^\frac12\\
   &\le \sqrt{2} c\bigg(\frac{(\sum_{ji}z_{ji}(r)^2)^\frac12}{(r-T_1)^\alpha}
   +\bigg(\sum_{ij}\bigg(\int_{T_1}^r\frac{z_{ji}(r)-z_{ji}(q)}{(r-q)^{1+\alpha}}dq\bigg)^2\bigg)^\frac12\bigg)\\
   &\le \sqrt{2}c\bigg(\frac{\|Z(r)\|_{L_2(V)}}{(r-T_1)^\alpha}+\int_{T_1}^r\frac{\|Z(r)-Z(q)\|_{L_2(V)}}{(r-q)^{1+\alpha}}dq\bigg),
\end{align*}
where we use that
 \begin{equation*}
 \bigg\| \int_{T_1}^r\frac{Z(r)-Z(q)}{(r-q)^{1+\alpha}}dq\bigg\|_{L_2(V)}\le \int_{T_1}^r\frac{\|Z(r)-Z(q)\|_{L_2(V)}}{(r-q)^{1+\alpha}}dq
\end{equation*}
and therefore, since $Z\in C^{\beta}([T_1,T_2];L_2(V))$,
\begin{equation}\label{DG}
    \|D_{T_1+}^\alpha Z[r]\|_{L_2(V)}\le c\|Z\|_\beta((r-T_1)^{-\alpha}+(r-T_1)^{\beta-\alpha}).
    \end{equation}
Similarly, for $ \omega \in \Omega$, it is straightforward to obtain that
\begin{equation}\label{Do}
|D_{T_2-}^{1-\alpha}\omega_{T_2-}[r]|\le c|||\omega|||_{\beta^\prime,T_1,T_2}(T_2-r)^{\alpha+{\beta^\prime}-1}.
\end{equation}
In fact, thanks to the definition of $(\theta_t)_{t\in\RR}$ given by (\ref{shift}), the following property follows:

\begin{align*}
    D_{{T_2}-}^{1-\alpha} \omega_{T_2-}[r]&=\frac{(-1)^{1-\alpha}}{\Gamma(\alpha)}
    \bigg(\frac{\omega(r)-\omega(T_2)}{(T_2-r)^{1-\alpha}}
    +(1-\alpha)\int_r^{T_2}\frac{\omega(r)-\omega(q)}{(q-r)^{2-\alpha}}dq\bigg)\\
    &=\frac{(-1)^{1-\alpha}}{\Gamma(\alpha)}
    \bigg(\frac{\omega(\tau+T_2)-\omega(T_2)}{(-\tau)^{1-\alpha}}
    +(1-\alpha)\int_{\tau+T_2}^{T_2}\frac{\omega(\tau+T_2)-\omega(q)}{(q-\tau-T_2)^{2-\alpha}}dq\bigg)\\
     &=\frac{(-1)^{1-\alpha}}{\Gamma(\alpha)}
    \bigg(\frac{\theta_{T_2}\omega(\tau)}{(-\tau)^{1-\alpha}}
    +(1-\alpha)\int_{\tau}^{0}\frac{\theta_{T_2}\omega(\tau)-\theta_{T_2}\omega(q)}{(q-\tau)^{2-\alpha}}dq\bigg)\\
    &=D_{{0}-}^{1-\alpha} (\theta_{T_2}\omega)_{0-}[\tau],
    \end{align*}
where $\tau=r-T_2 \in [T_1-T_2,0]$. From (\ref{Do}), since $\theta_{T_2}\omega \in C^{\beta^\prime} ([T_1-T_2,0];V)$ we also know that $|D_{{0}-}^{1-\alpha} (\theta_{T_2}\omega)_{0-}[\tau]|\le c|||\theta_{T_2}\omega|||_{\beta^\prime,T_1-T_2,0}(T_2-r)^{\alpha+{\beta^\prime}-1}$ and therefore, for any $\omega \in \Omega$ we obtain

\begin{equation}\label{Domega}
|D_{T_2-}^{1-\alpha}\omega_{T_2-}[r]|\le c|||\theta_{T_2}\omega|||_{\beta^\prime,T_1-T_2,0}(T_2-r)^{\alpha+{\beta^\prime}-1}.
\end{equation}

Thus, combining (\ref{DG}) and (\ref{Domega}), this leads to
\begin{align*}\label{eq5}
\begin{split}
    &\bigg|\int_{T_1}^{T_2} Z d\omega\bigg|\\
    \le& c \|Z\|_\beta |||\theta_{T_2}\omega|||_{\beta^\prime,T_1-T_2,0}\int_{T_1}^{T_2}((r-T_1)^{-\alpha}+(r-T_1)^{-\alpha+\beta})(T_2-r)^{\alpha+{\beta^\prime}-1}dr\\
    \le& c \|Z\|_\beta |||\theta_{T_2}\omega|||_{\beta^\prime,T_1-T_2,0}((T_2-T_1)^{{\beta^\prime}}+(T_2-T_1)^{{\beta+\beta^\prime}})\\
    \le&
     c \|Z\|_\beta |||\theta_{T_2}\omega|||_{\beta^\prime,T_1-T_2,0}(T_2-T_1)^{{\beta^\prime}}.
    \end{split}
\end{align*}

The constant $c$ appearing above depends on $\beta$ and $\beta^\prime$.

Note that the left hand side is independent of the choice of $\alpha$ contained in an appropriate interval.

\end{proof}

\begin{remark}\label{r2}
As a generalization of Z{\"a}hle \cite{Zah98} Theorem 2.5 we have the additivity of the integrals:

\begin{equation*}
    \int_{T_1}^{T_2} Z d\omega+\int_{T_2}^{T_3} Z d\omega=\int_{T_1}^{T_3} Z d\omega\quad\text{for }T_1<T_2<T_3.
\end{equation*}
\end{remark}

Furthermore, for the set $\Omega$ introduced above and the flow $\theta$ defined by (\ref{shift}), we can also establish the behavior of the stochastic integral when performing a change of variable, which is a generalization of Lemma 5 in \cite{GLS09}. Nevertheless, for the sake of completeness, we include here a short proof of it.

\begin{remark}\label{r3} For any $\tau \in \RR$ yields
\begin{equation*}
    \int_{T_1}^{T_2} Z(r) d\omega(r)=\int_{T_1-\tau}^{T_2-\tau} Z(r+\tau) d\theta_\tau \omega(r).
\end{equation*}
\end{remark}

\begin{proof} We know that
\begin{equation*}
    \int_{T_1}^{T_2} Z(r) d\omega(r)=\sum_{j}\bigg(\sum_i\int_{T_1}^{T_2}
    D_{T_1+}^{\alpha}z_{ji}[r]D_{T_2-}^{1-\alpha}\zeta_{iT_2-}[r]dr \bigg) e_j,
\end{equation*}
where $z_{ji}=(Z,E_{ji})_{L_2(V)}$ and $\zeta_{iT_2-}=(\omega_{T_2-}(t),e_i)$ have been introduced previously in the construction of the integral. Taking into account the definition of the fractional derivatives and the expression of the Wiener shift, making the change of variables $s=r-\tau$ and afterwards renaming $s$ as $r$, we have

\begin{align*}
\int_{T_1}^{T_2}
    D_{T_1+}^{\alpha}z_{ji}[r]D_{T_2-}^{1-\alpha}\zeta_{iT_2-}[r]dr =\int_{T_1-\tau}^{T_2-\tau} D_{{(T_1-\tau)}+}^{\alpha} {z_{ji}(\tau+\cdot)[r]}D_{(T_2-\tau)-}^{1-\alpha}\theta_\tau \zeta_{i(T_2-\tau)-}[r]dr.
    \end{align*}
Therefore
\begin{align*}
    \int_{T_1}^{T_2} Z(r) d\omega(r)&=\sum_{j}\bigg(\sum_i\int_{T_1-\tau}^{T_2-\tau} D_{{(T_1-\tau)}+}^{\alpha} {z_{ji}(\tau+\cdot)[r]}D_{(T_2-\tau)-}^{1-\alpha}\theta_\tau \zeta_{i(T_2-\tau)-}[r]dr \bigg) e_j\\
    &= \int_{T_1-\tau}^{T_2-\tau} Z(r+\tau) d\theta_\tau \omega(r).
\end{align*}\end{proof}

In the following we consider non-linear operators $G: V\to L_2(V)$ with appropriate regularity assumptions, which allows to establish the next collection of estimates:
\begin{lemma}\label{l2}
Let $G: V\to L_2(V)$ be a  twice continuously
Fr\'echet--differentiable operator with bounded first and second
derivatives. Let us denote, respectively, by $c_{DG},\, c_{D^2G}$
the bounds for these derivatives and set $c_G=\|G(0)\|_{L_2(V)}$.
Then, for $u_1,\,u_2,\,v_1,\,v_2\in V$, we have
\begin{align*}
    &\|G(u_1)\|_{L_2(V)}\le c_G+c_{DG}|u_1|,\\
    &\|G(u_1)-G(v_1)\|_{L_2(V)}\le c_{DG}|u_1-v_1|,\\
    &\|G(u_1)-G(v_1)-(G(u_2)-G(v_2))\|_{L_2(V)}\\
    & \quad \le c_{DG}|u_1-v_1-(u_2-v_2)|+c_{D^2G} |u_1-u_2|(|u_1-v_1|+|u_2-v_2|).
\end{align*}
\end{lemma}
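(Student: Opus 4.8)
The plan is to verify the three inequalities in turn by repeatedly invoking Taylor's theorem for the Fréchet-differentiable map $G$, keeping careful track of which derivative bound is used. Throughout I would use the fundamental theorem of calculus in the form
\[
G(a)-G(b)=\int_0^1 DG(b+t(a-b))(a-b)\,dt,
\]
where the integrand lies in $L_2(V)$ and the integral is understood as a Bochner integral in that Hilbert–Schmidt space; the estimate $\|G(a)-G(b)\|_{L_2(V)}\le \sup_{\xi}\|DG(\xi)\|\,|a-b|$ then follows by pulling the norm inside the integral, giving the second inequality with constant $c_{DG}$. Taking $b=0$ and using the triangle inequality $\|G(u_1)\|_{L_2(V)}\le\|G(0)\|_{L_2(V)}+\|G(u_1)-G(0)\|_{L_2(V)}$ yields the first inequality $\|G(u_1)\|_{L_2(V)}\le c_G+c_{DG}|u_1|$.

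For the third, "second-order" inequality I would set
\[
\Phi(x)=G(u_1+x(u_2-u_1))-G(v_1+x(v_2-v_1)),\qquad x\in[0,1],
\]
so that $G(u_1)-G(v_1)-(G(u_2)-G(v_2))=\Phi(0)-\Phi(1)=-\int_0^1\Phi'(x)\,dx$, and compute
\[
\Phi'(x)=DG(u_1+x(u_2-u_1))(u_2-u_1)-DG(v_1+x(v_2-v_1))(v_2-v_1).
\]
I would then split $\Phi'(x)$ into two pieces by adding and subtracting $DG(u_1+x(u_2-u_1))(v_2-v_1)$: the first piece is $DG(u_1+x(u_2-u_1))\big((u_2-u_1)-(v_2-v_1)\big)$, bounded in $L_2(V)$ by $c_{DG}\,|u_1-v_1-(u_2-v_2)|$; the second is $\big(DG(u_1+x(u_2-u_1))-DG(v_1+x(v_2-v_1))\big)(v_2-v_1)$, and since $DG$ has Lipschitz constant $c_{D^2G}$ (again by the mean value theorem applied to $DG$), its $L_2(V)$-norm is at most $c_{D^2G}\,\big|(u_1-v_1)+x(u_2-u_1-(v_2-v_1))\big|\,|v_2-v_1|$. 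Integrating over $x\in[0,1]$ and bounding the argument of $c_{D^2G}$ crudely by $|u_1-v_1|+|u_2-v_2|$ (and $|v_2-v_1|$ likewise, after a symmetrization or a direct estimate $|v_2-v_1|\le|u_1-v_1|+|u_2-v_2|+|u_1-v_1-(u_2-v_2)|$ absorbed appropriately) gives the stated bound, possibly after adjusting the names of the constants.

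The only real subtlety — and the step I would watch most carefully — is the bookkeeping in the last inequality: one must choose the splitting of $\Phi'(x)$ so that the "first-difference" term is genuinely controlled by $c_{DG}|u_1-v_1-(u_2-v_2)|$ rather than by something larger, and then ensure the remaining term collapses into the product $c_{D^2G}|u_1-u_2|(|u_1-v_1|+|u_2-v_2|)$ without an extra additive term. An alternative, cleaner organization is to write $G(u_1)-G(v_1)-(G(u_2)-G(v_2))$ as a double integral $\int_0^1\!\int_0^1 D^2G(\cdots)[\,\cdot,\cdot\,]\,ds\,dt$ plus a first-order remainder, which makes the appearance of $c_{D^2G}$ and of the product of increments transparent; I would likely present it that way. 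Everything else is a routine application of the mean value inequality in Banach spaces together with the triangle inequality in $L_2(V)$, so no genuine obstacle is expected.
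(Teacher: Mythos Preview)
Your approach is correct and is exactly the standard argument; the paper itself does not give a proof but simply refers to Maslowski and Nualart \cite{MasNua03}, so there is nothing to compare against beyond noting that your mean-value-theorem computation is the expected one.

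One small bookkeeping correction: with the splitting you wrote (adding and subtracting $DG(u_1+x(u_2-u_1))(v_2-v_1)$) the second-order term comes out as $c_{D^2G}(|u_1-v_1|+|u_2-v_2|)\,|v_1-v_2|$, with the wrong outer factor, and this cannot be repaired just by renaming constants. If instead you add and subtract $DG(v_1+x(v_2-v_1))(u_2-u_1)$, the two pieces become
\[
DG(v_1+x(v_2-v_1))\big((u_2-u_1)-(v_2-v_1)\big)
\]
and
\[
\big(DG(u_1+x(u_2-u_1))-DG(v_1+x(v_2-v_1))\big)(u_2-u_1),
\]
and since $(1-x)(u_1-v_1)+x(u_2-v_2)$ is the difference of the two arguments of $DG$, the second piece is bounded by $c_{D^2G}(|u_1-v_1|+|u_2-v_2|)\,|u_1-u_2|$ on the nose. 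You already identified this as the delicate step, so this is only a matter of choosing the right cross-term.
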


The proof of this Lemma is straightforward, see for instance Maslowski and Nualart \cite{MasNua03}.

In the next lemma we estimate the fractional derivative of a very specific term which will appear later.
\begin{lemma}\label{l8}
Let $S$ be the semigroup generated by $-A$ introduced at the beginning of this subsection and assume that $G$ satisfies the assumptions of Lemma \ref{l2}. Assume that $u\in C^{\beta,\sim}([T_1,T_2];V)$ for $T_1\ge 0$.
Then for the orthonormal base $(e_i)_{i\in\NN}$ of $V$ introduced at the beginning of this section, for any $i,\,j\in\NN$,
\begin{equation*}
     (e_i,S(t-\cdot)G(u(\cdot))e_j) \in I_{T_1+}^\alpha(L_p((T_1,t);\RR)).
\end{equation*}
In addition, the mapping $r\mapsto D_{T_1+}^\alpha S(t-\cdot)G(u(\cdot))[r]$ is measurable on $[T_1,t]$ for $t\le T_2$ and satisfies the estimate
\begin{equation*}
  ||D_{T_1+}^\alpha S(t-\cdot)G(u(\cdot))[r]||_{L_2(V)}\le c (1+\|u\|_{\beta,\sim})(r-T_1)^{-\alpha}\bigg(1+\frac{(r-T_1)^\beta}{(t-r)^\beta}\bigg).
\end{equation*}
\end{lemma}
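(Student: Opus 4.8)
The plan is to estimate the Weyl fractional derivative
\[
D_{T_1+}^\alpha S(t-\cdot)G(u(\cdot))[r]
=\frac{1}{\Gamma(1-\alpha)}\bigg(\frac{S(t-r)G(u(r))}{(r-T_1)^\alpha}+\alpha\int_{T_1}^r\frac{S(t-r)G(u(r))-S(t-q)G(u(q))}{(r-q)^{1+\alpha}}dq\bigg)
\]
term by term, exactly as in the proof of Lemma \ref{l3}. First I would handle the first term: by \eqref{eq1} with $\gamma=0$ (so $|S(t-r)|_{L(V)}\le c$) and Lemma \ref{l2} we get $\|S(t-r)G(u(r))\|_{L_2(V)}\le c(c_G+c_{DG}|u(r)|)\le c(1+\|u\|_{\beta,\sim})$, which contributes the $(r-T_1)^{-\alpha}$ part of the claimed bound. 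For the integral term I would split the numerator in the standard way,
\[
S(t-r)G(u(r))-S(t-q)G(u(q)) = S(t-r)\big(G(u(r))-G(u(q))\big) + \big(S(t-r)-S(t-q)\big)G(u(q)),
\]
and estimate the two resulting integrals separately.

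For the first piece, $\|S(t-r)(G(u(r))-G(u(q)))\|_{L_2(V)}\le c\,c_{DG}|u(r)-u(q)|$ by Lemma \ref{l2}, and here is where the modified H\"older space enters: since $u\in C^{\beta,\sim}$ we only have $|u(r)-u(q)|\le \|u\|_{\beta,\sim}(q-T_1)^{-\beta}(r-q)^\beta$ for $T_1<q<r$. Plugging this in gives $\int_{T_1}^r (q-T_1)^{-\beta}(r-q)^{\beta-1-\alpha}dq$, which is a Beta-type integral equal to $c(r-T_1)^{-\alpha}$ provided $\alpha<\beta$ (so the exponent $\beta-1-\alpha>-1$) and $\beta<1$; this produces another $(r-T_1)^{-\alpha}$ contribution. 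For the second piece I would use \eqref{eq30} (first line) with $\delta=\gamma=0$: $\|(S(t-r)-S(t-q))G(u(q))\|_{L_2(V)}\le c(r-q)^\alpha(t-r)^{-\alpha}\|G(u(q))\|_{L_2(V)}\le c(1+\|u\|_{\beta,\sim})(r-q)^\alpha(t-r)^{-\alpha}$, wait — more carefully one should use exponent $\beta$ rather than $\alpha$ in \eqref{eq30}, i.e. $(r-q)^{\beta}(t-r)^{-\beta}$ (legitimate since $\beta<1$), so that after dividing by $(r-q)^{1+\alpha}$ and integrating $\int_{T_1}^r(r-q)^{\beta-1-\alpha}dq = c(r-T_1)^{\beta-\alpha}$ one obtains $c(1+\|u\|_{\beta,\sim})(r-T_1)^{\beta-\alpha}(t-r)^{-\beta} = c(1+\|u\|_{\beta,\sim})(r-T_1)^{-\alpha}\frac{(r-T_1)^\beta}{(t-r)^\beta}$, which is precisely the remaining term in the asserted estimate. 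Collecting the three contributions yields the bound.

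Before the estimate I would dispatch the membership and measurability claims. Weak measurability of $r\mapsto (e_i,S(t-\cdot)G(u(\cdot))e_j)$ follows from continuity of $r\mapsto S(t-r)$ in the strong operator topology on $(0,t]$ together with continuity of $u$ and $G$; by Pettis' theorem and separability of $L_2(V)$ the $L_2(V)$-valued map $r\mapsto S(t-r)G(u(r))$ is strongly measurable, and then measurability of its fractional derivative follows as in the construction of \eqref{eq3}. The statement $(e_i,S(t-\cdot)G(u(\cdot))e_j)\in I_{T_1+}^\alpha(L_p((T_1,t);\RR))$ is equivalent, by the characterization in Samko {\it et al.} \cite{Samko}, to $D_{T_1+}^\alpha$ of this scalar function lying in $L_p$ for suitable $p$ with $\alpha p<1$; but the componentwise bound is dominated by the $L_2(V)$-norm bound just derived, namely $c(1+\|u\|_{\beta,\sim})(r-T_1)^{-\alpha}(1+(r-T_1)^\beta(t-r)^{-\beta})$, and this function is in $L_p((T_1,t);\RR)$ once $\alpha p<1$ (the singularity at $r=T_1$ is $(r-T_1)^{-\alpha}$, integrable in $L^p$ iff $\alpha p<1$) and $\beta p<1$ (the singularity at $r=t$); the latter holds since $\beta<H$ and $\alpha<\beta$ can be chosen with both $\alpha p<1$ and $\beta p<1$.

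The main obstacle is the bookkeeping around the near-diagonal singularity of $u$ at the left endpoint $T_1$: because $u$ is only in the weighted space $C^{\beta,\sim}$, the factor $(q-T_1)^{-\beta}$ appears in the integrand of the difference-quotient term, and one must check that the Beta integral $\int_{T_1}^r(q-T_1)^{-\beta}(r-q)^{\beta-1-\alpha}dq$ still converges and scales like $(r-T_1)^{-\alpha}$ — this is exactly what pins down the admissible range of $\alpha$ (one needs $\alpha<\beta$, which is available in the hypothesis $1-\beta'<\alpha<\beta$), and it is the reason the conclusion keeps the $(r-T_1)^{-\alpha}$ blow-up rather than something milder. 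The second delicate point is choosing the exponent in \eqref{eq30} optimally: using exponent $\beta$ (not $\alpha$) in the estimate $\|S(t-r)-S(t-q)\|_{L(V)}\le c(r-q)^\beta(t-r)^{-\beta}$ is what produces the $(r-T_1)^\beta/(t-r)^\beta$ shape in the final bound, and one must verify this is legitimate, i.e. that $\beta\in[0,1]$ so that \eqref{eq2}/\eqref{eq30} apply with $\sigma-\theta=\beta$. Everything else is routine Beta-function integration of the type already carried out in Lemma \ref{l3}.
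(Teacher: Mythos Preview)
Your estimate for $\|D_{T_1+}^\alpha S(t-\cdot)G(u(\cdot))[r]\|_{L_2(V)}$ is correct and is exactly the computation the paper has in mind: the paper does not write it out here but defers to the display \eqref{l11} in the proof of Lemma \ref{l6}, where the same splitting of $S(t-r)G(u(r))-S(t-q)G(u(q))$ (with the roles of $r$ and $q$ in the semigroup piece swapped, which is immaterial), the same use of \eqref{eq30} with exponent $\beta$, and the same Beta-integral computation with the weighted H\"older bound $|u(r)-u(q)|\le \|u\|_{\beta,\sim}(q-T_1)^{-\beta}(r-q)^\beta$ are carried out.

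For the membership claim your route differs from the paper's. The paper exploits the eigenbasis to write $(e_i,S(t-\cdot)G(u(\cdot))e_j)=e^{-\lambda_i(t-\cdot)}(e_i,G(u(\cdot))e_j)$, notes that the exponential factor is Lipschitz, and then verifies Samko {\it et al.}~Theorem 13.2 for the scalar function $g(u(\cdot))$ directly, by checking the $L_p$-convergence of the truncated Marchaud derivatives $\psi_\eps$. You instead infer membership from the $L_p$ bound on the formal Weyl expression. One caution: the characterization in Samko is not literally ``$D_{T_1+}^\alpha f\in L_p$'' but rather ``$f\in L_p$ and the truncated derivatives $\psi_\eps$ converge in $L_p$''; your statement of the equivalence is a shade too quick. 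That said, once you have the pointwise integrand bounds $(q-T_1)^{-\beta}(r-q)^{\beta-1-\alpha}$ and $(r-q)^{\beta-1-\alpha}(t-r)^{-\beta}$, the required $L_p$-convergence of $\psi_\eps$ follows by dominated convergence, so the gap is only expository. Your approach buys you a cleaner, basis-free argument; the paper's buys an explicit verification of the Samko criterion without appealing to dominated convergence.
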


\begin{proof}

We restrict ourselves to the case $T_1=0$. We want to prove that
\begin{equation*}
    e^{-\lambda_i(t-\cdot)}(e_i,G(u(\cdot))e_j)\in I_{0+}^\alpha(L_p((0,t);\RR)),
\end{equation*}

$p>1$. Since $e^{-\lambda_i(t-\cdot)}$ is Lipschitz on $[0,t]$ it is enough to consider a Lipschitz function $g:V\to \RR$, where the Lipschitz constant is denoted by $L_g$, and ask whether $g(u(\cdot))\in I_{0+}^\alpha(L_p((0,t);\RR))$. To see this property we apply Samko {\it et al.}  \cite{Samko} Theorem 13.2. Trivially $g(u(\cdot))\in L_p((0,t);\RR)$ and $g(u(0))$ exists since $u$ is continuous on $[0,t)$. Stated as before $\beta>\alpha$ (see Lemma \ref{l3}), choosing $p>1$  such that $p\beta <1$ (and hence $p\alpha<1$) and $p(1+\alpha-\beta)<1$, and considering

\begin{equation*}
    \psi_\eps(r)=\left\{\begin{array}{ccc}
                   \int_0^{r-\eps}\frac{g(u(r))-g(u(q))}{(r-q)^{1+\alpha}}dq & : & r> \eps \\
                    \frac{g(u(r))}{\alpha}\bigg(\frac{1}{\eps^\alpha}-\frac{1}{r^\alpha}\bigg)& : & 0\le r\le \eps
                 \end{array}\right.
\end{equation*}

we want to show that the function $\psi_\eps$ converges to $\psi_0$ given by the first part of the definition of $\psi_\eps$ for $\eps=0$ in $L_p((0,t);\RR)$ for $\eps\to 0$. Take at first $\eps<r$, then

\begin{align*}
 \int_\eps^t|\psi_\eps(r)-\psi_0(r)|^pdr &\le L_g\|u\|_{\beta,\sim}\int_\eps^t\bigg(\int_{r-\eps}^r\frac{(r-q)^\beta}{(r-q)^{1+\alpha}q^\beta}dq\bigg)^pdr\\
 &\le
L_g\|u\|_{\beta,\sim}\int_\eps^t \int_{r-\eps}^r\frac{(r-q)^{p\beta}}{(r-q)^{p(1+\alpha)}q^{p\beta}}dq \eps^{p/p^\prime}dr
\end{align*}

where $p^\prime$ is the conjugate exponent of $p$. Since the interior integral can be enlarged to an integral from 0 to $r$ which is finite if $p$ is close to 1, we have that

\begin{equation*}
  \int_\eps^t|\psi_\eps(r)-\psi_0(r)|^pdr \le c   \eps^{p/p^\prime}\int_0^t r^{1-(1+\alpha)p}dr
\end{equation*}

that converges to zero for $\eps\to 0$. Furthermore, for $r\in [0,\eps]$, the convergence of $\psi_\eps $ to $ 0$ in $L_p((0,t);\RR)$ follows thanks to the boundedness of $g(u(\cdot))$ over that interval for $p$ close to 1.

To see the a priori estimate for $D_{T_1+}^\alpha S(t-\cdot)G(u(\cdot))[r]$ we refer to \eqref{l11} where a similar estimate is derived.

 \end{proof}
We also can state  that $\zeta_{iT_2-}=(\omega_{T_2-}(t),e_i)$ is in $I_{T_2-}^{1-\alpha}(L_{p^\prime}((T_1,T_2);\RR))$ for {\em any} $p^\prime>1$ because this function is $\beta^\prime$--H{\"o}lder continuous. Note that $\zeta_{iT_2-}\in L_{p^\prime}((T_1,T_2);\RR))$ and in addition
\begin{equation*}
  r\mapsto \frac{\zeta_{iT_2-}(r)}{(T_2-r)^{1-\alpha}}
\end{equation*}
is $\beta^\prime+\alpha-1$--H{\"o}lder-continuous when we augment this definition by 0 at $r=T_2$, which is based on the fact that $\zeta_{iT_2-}(T_2)=0$, which follows from $\omega_{T_2-}(T_2)=0$.
Hence this function is in $L_{p^\prime}((T_1,T_2);\RR))$ for any $p^\prime>1$.
We can also check the other conditions in  Samko \cite{Samko} Theorem 13.2 such that we get that $\zeta_{iT_2-}\in I_{T_2-}^{1-\alpha}(L_{p^\prime}((T_1,T_2);\RR))$.  Hence the integral
\begin{equation}\label{final}
\int_{T_1}^{T_2}D_{T_1+}^\alpha S(t-\cdot)G(u(\cdot))[r]D_{T_2-}^{1-\alpha}\omega_{T_2}[r]dr
\end{equation}
is well defined in the sense of \eqref{eq36}. In particular by Lemma \ref{l8}, by the fact that $(e_i,S(t)G(u(0))e_j)$ is bounded and the previous discussion, it make sense to define the integral \eqref{final} by means of its components.
\section{Non--autonomous dynamical systems of evolution equations driven by an integral with H{\"o}lder continuous integrator}\label{N-Ads}\label{s3}

We now consider the following evolution equation on $[0,T]$:

\begin{equation}\label{eq6}
    du=Au\,dt+F(u)\,dt+G(u)\,d\omega,\qquad u(0)=u_0\in V
\end{equation}

driven by a H{\"o}lder continuous path $\omega$ with H\"older exponent greater than $1/2$.

This equation is interpreted in the mild sense such that for $t\in [0,T]$ we have to solve

\begin{equation}\label{eq7}
    u(t)=S(t)u_0+\int_0^tS(t-r)F(u(r))dr+\int_0^tS(t-r)G(u(r))d\omega.
\end{equation}

The non-linear term $G$ satisfies the assumptions of Lemma \ref{l2}. The mapping $F:V\to V$ is supposed to be Lipschitz continuous, but in fact we assume that $F\equiv 0$; this is a simplification that we make for brevity, and of course that we would achieve the same existence results as we obtain below assuming that $F$ were Lipschitz. \\

The integral with respect to $d\omega$ is interpreted in the sense of the previous section.
Let $u$ in $C^{\beta,\sim}([0,T];V)$ and denote by
$\tT(\cdot,\omega,u_0)$ the operator defined on $C^{\beta,\sim}([0,T];V)$ given by the right hand side of (\ref{eq7}).

Note that $\tT$ also depends on the time interval $[0,T]$, but to make simpler the notation we do not indicate this dependence in the notation of $\tT$.

We also note that we can consider the system (\ref{eq6}) on other general time intervals $[T_1,T_2]$.
\medskip

Existence of this kind of equation has been investigated by Maslowski and Nualart \cite{MasNua03} when considering as integrators of the stochastic integrals a fractional Brownian motion with Hurst parameter in $(1/2,1)$. However, as we pointed out in the Introduction and in Remark \ref{MN}, in this article we want to present the existence theory in other function spaces, namely the space of H{\"o}lder continuous functions for appropriate exponents.

\medskip

 We need the following technical lemma.

\begin{lemma}\label{l16} Let $a>-1,\,b>-1$ and $a+b\ge -1,\,d> 0$ and $t \in[0,T]$. If for $\rho >0$ we define

\begin{equation*}
     K(\rho):=\sup_{t \in[0,T]}t^d\int_0^1e^{-\rho t(1-v)}v^{a}(1-v)^bdv,
     \end{equation*}

then we have that $\lim_{\rho\to\infty}K(\rho)=0$.
\end{lemma}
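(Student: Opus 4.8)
\emph{Proof strategy.} The plan is to collapse the two–variable supremum into a one–variable limit using two elementary monotonicity remarks. I would set $J(t,\rho):=\int_0^1 e^{-\rho t(1-v)}v^a(1-v)^b\,dv$, so that $K(\rho)=\sup_{t\in[0,T]}t^dJ(t,\rho)$, and record that $c_{a,b}:=\int_0^1 v^a(1-v)^b\,dv<\infty$ since $a>-1$ and $b>-1$. The first observation is that, for each fixed $v\in(0,1)$, $e^{-\rho t(1-v)}$ is nonincreasing in $t$ while $v^a(1-v)^b\ge 0$; hence $t\mapsto J(t,\rho)$ is nonincreasing and $J(t,\rho)\le J(0,\rho)=c_{a,b}$ for every $t$.

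Next I would fix a threshold $\delta\in(0,T]$ and split $[0,T]=[0,\delta]\cup[\delta,T]$. On $[0,\delta]$ I would simply use $t^dJ(t,\rho)\le\delta^d c_{a,b}$, a bound independent of $\rho$ that is small once $\delta$ is small (this is where $d>0$ matters). On $[\delta,T]$ I would again invoke monotonicity in $t$: since $t^d\le T^d$ and $J(t,\rho)\le J(\delta,\rho)$ there, we get $t^dJ(t,\rho)\le T^dJ(\delta,\rho)$, so everything reduces to showing $J(\delta,\rho)\to 0$ as $\rho\to\infty$ with $\delta$ fixed. This follows at once from dominated convergence (the integrand tends to $0$ pointwise on $(0,1)$ and is dominated by the integrable function $v^a(1-v)^b$); for an explicit rate I would split $\int_0^1=\int_0^{1/2}+\int_{1/2}^1$, bounding the first piece by $c_{a,b}e^{-\rho\delta/2}$ (since $1-v\ge\frac12$ there) and the second, after the substitution $u=1-v$, by $C\int_0^\infty e^{-\rho\delta u}u^b\,du=C\Gamma(b+1)(\rho\delta)^{-(b+1)}$, which is finite because $b+1>0$.

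Putting the two regions together yields $K(\rho)\le\delta^d c_{a,b}+T^dJ(\delta,\rho)$. I would finish with the usual two–step choice: given $\eps>0$, first pick $\delta$ with $\delta^d c_{a,b}<\eps/2$, then pick $\rho_0$ with $T^dJ(\delta,\rho)<\eps/2$ for all $\rho\ge\rho_0$, so that $K(\rho)<\eps$ for such $\rho$; alternatively, taking $\delta=\rho^{-1/2}$ makes the estimate effective. I do not anticipate any genuine obstacle here: the only point needing a little attention is the integrability bookkeeping near the endpoints $v=0$ and $v=1$, which is exactly where $a>-1$ and $b>-1$ enter (the further hypothesis $a+b\ge -1$ does not seem to be needed for this lemma itself, and is presumably listed with a view to its later application).
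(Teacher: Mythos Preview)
Your proof is correct and follows the same overall strategy as the paper: split the supremum into a small-$t$ region, where $t^d$ is small, and a large-$t$ region, where the exponential decay of the integral takes over. The paper makes the specific choice $\delta=\rho^{-1/2}$ from the outset, exactly as you suggest at the end, and on the small-$t$ side obtains the identical bound $\rho^{-d/2}B(a+1,b+1)$.

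The one genuine difference is in how the large-$t$ integral is dispatched. The paper bounds $e^{-\rho t(1-v)}\le e^{-\rho^{1/2}(1-v)}$ for $t>\rho^{-1/2}$ (which is precisely your monotonicity observation), but then identifies $\int_0^1 e^{-\rho^{1/2}(1-v)}v^a(1-v)^b\,dv$ in terms of the Kummer confluent hypergeometric function ${}_1F_1$ and invokes its known large-argument asymptotics to extract a rate $\rho^{-(b+1)/2}$. Your route---dominated convergence, or the elementary split at $v=1/2$ followed by the Gamma integral---achieves the same conclusion with the same rate and no special-function machinery, and is arguably cleaner. You are also right that the hypothesis $a+b\ge -1$ plays no role in this lemma; it is there because in the applications (Lemmas~\ref{l6} and~\ref{l5}) one has $a=-\alpha$, $b=\alpha-1$, so $a+b=-1$ exactly, and the authors are simply recording the parameter ranges that actually occur.
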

\begin{proof}
The result follows easily since

\begin{align*}
t^d \int_0^1 e^{-\rho t (1-v) }v^{a}(1-v)^b dv
\le\left\{
\begin{array}{lcr}
\rho^{-\frac{d}{2}}\int_0^1v^{a}(1-v)^{b}dv&:&\text{for }t \le\rho^{-\frac12}\\
t^d \int_0^1e^{-\rho^\frac12(1-v)}v^{a}(1-v)^{b}dv&:&\text{for }t>\rho^{-\frac12}
\end{array}\right.\\
\end{align*}
which means that

\begin{align*}
\begin{split}
K(\rho)
\le &\sup_{t\in [0,T]}\left\{
\begin{array}{lcr}
\rho^{-\frac{d}{2}}B(a+1,b+1)&:&\text{for }t\le\rho^{-\frac12}\\\\
t^d e^{-\rho^\frac12} \displaystyle{\frac{\Gamma(a+1)\Gamma(b+1)}{\Gamma(b+a+2)}} {}_1F_1(a+1,b+a+2,\rho^{\frac{1}{2}})&:&\text{for }t>\rho^{-\frac12}
\end{array}\right\}\\
\end{split}
\end{align*}

where $B(\cdot,\cdot)$ denotes the Beta function, $\Gamma(\cdot)$ the Gamma function and

$$ {}_1F_1(x,y,z):= \frac{\Gamma(y)}{\Gamma(y-x) \Gamma(x)}\int_0^1 e^{zv}v^{x-1} (1-v)^{y-x-1} \,dv,$$
is the Kummer function or hypergeometric function. The property on the convergence of $K(\rho)$ follows from the asymptotic properties of the Kummer function, see for instance Chapter 13 in \cite{abraste}. In particular, according to property 13.1.4. in  \cite{abraste} we have

$$ {}_1F_1(x,y,z)= \frac{\Gamma(y)}{\Gamma(x)}e^z z^{x-y} (1+O(|z|^{-1}))$$

which implies
\begin{align*}
K(\rho)
\le& \max(\rho^{-\frac{d}{2}}B(a+1,b+1),T^d \Gamma (b+1) \rho^{- \frac{1}{2}(b+1)} (1+O(\rho^{-\frac12}))),
\end{align*}

and therefore the convergence holds true since $b>-1$.
\end{proof}

Of course the function $K$ introduced in the last lemma also depends on the parameters $d$, $a$ and $b$, however in what follows we do not indicate this dependence except the one on $\rho$.\\

The next result will be crucial when proving the existence of solutions to (\ref{eq6}) by using the Banach fixed point theorem. Remember that we have chosen $1/2<\beta<\beta^\prime<H$ with $1-\beta^\prime<\alpha<\beta$.

\begin{lemma}\label{l6}
For any $T>0$
there exists a $c_T>0$ such that for $\omega\in \Omega$ and $u\in C^{\beta,\sim}([0,T];V)$
\begin{equation}\label{eq20}
    \|\tT(u,\omega,u_0)\|_{\beta,\rho,\sim}\le c_T|||\omega|||_{\beta^\prime,0,T}K(\rho)(1+\|u\|_{\beta,\rho,\sim})+c|u_0|.
\end{equation}
\end{lemma}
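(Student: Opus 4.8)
The plan is to estimate separately the two contributions to $\|\tT(u,\omega,u_0)\|_{\beta,\rho,\sim}$: the linear part $S(t)u_0$ and the stochastic convolution $t\mapsto\int_0^tS(t-r)G(u(r))\,d\omega$ (recall $F\equiv0$). For the linear part, the bound $|S(t)u_0|\le|u_0|$ and the H\"older-type estimate $|S(t)u_0-S(s)u_0|_V\le c(t-s)^\beta s^{-\beta}|u_0|$ coming from \eqref{eq2} (after inserting a factor $(-A)^\beta S(s)$, which costs $s^{-\beta}$) show directly that $\|S(\cdot)u_0\|_{\beta,\sim}\le c|u_0|$; since the $\rho$-weight only decreases things, this gives the term $c|u_0|$. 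The work is all in the convolution term.

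For the convolution, call it $v(t)=\int_0^tS(t-r)G(u(r))\,d\omega$. I would first bound the supremum $e^{-\rho t}|v(t)|$. Using the representation \eqref{eq3} together with the estimate from Lemma \ref{l8} for $\|D_{0+}^\alpha S(t-\cdot)G(u(\cdot))[r]\|_{L_2(V)}$ and the estimate \eqref{Do}--\eqref{Domega} for $|D_{T_2-}^{1-\alpha}\omega_{T_2-}[r]|$ on $[0,t]$, one gets
\begin{equation*}
|v(t)|\le c(1+\|u\|_{\beta,\sim})|||\omega|||_{\beta',0,T}\int_0^t r^{-\alpha}\Bigl(1+\frac{r^\beta}{(t-r)^\beta}\Bigr)(t-r)^{\alpha+\beta'-1}\,dr,
\end{equation*}
which after substituting $r=tv$ becomes $t^{\beta'}\int_0^1 v^{-\alpha}(1+v^\beta(1-v)^{-\beta})(1-v)^{\alpha+\beta'-1}\,dv$ times the same prefactor. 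To bring in the exponential weight and turn $\|u\|_{\beta,\sim}$ into $\|u\|_{\beta,\rho,\sim}$ one splits $G(u(r))=G(u(r))$ and writes, inside the fractional derivative, $u(r)=u(0)+(u(r)-u(0))$ so that the increments of $u$ carry a factor $e^{\rho r}$ relative to the $\rho$-weighted norm; then $e^{-\rho t}$ times the above is dominated by $K(\rho)$-type integrals, i.e.\ expressions $\sup_t t^{d}\int_0^1e^{-\rho t(1-v)}v^a(1-v)^b\,dv$ with $a=-\alpha>-1$, $b\in\{\alpha+\beta'-1,\ \beta'-1\}>-1$, $a+b\ge-1$, and $d=\beta'$ or $\beta+\beta'$. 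Lemma \ref{l16} applies to each and yields a factor $K(\rho)$ (absorbing the various $K$'s with different parameters into one by taking a max). Handling the bounded piece $(e_i,S(t)G(u(0))e_j)$ requires the side remark after \eqref{final}, and contributes only a $c_T|||\omega|||_{\beta',0,T}K(\rho)(1+|u(0)|)$-type term, harmless since $|u(0)|\le\|u\|_{\beta,\rho,\sim}$.

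Next I would bound the seminorm part $\sup_{0<s<t\le T}s^\beta e^{-\rho t}|v(t)-v(s)|/|t-s|^\beta$. Here I split $v(t)-v(s)=\int_s^tS(t-r)G(u(r))\,d\omega+\int_0^s(S(t-r)-S(s-r))G(u(r))\,d\omega$. The first integral is controlled exactly as above over $[s,t]$, producing $(t-s)^{\beta'}\le(t-s)^{\beta}T^{\beta'-\beta}$ and hence the desired $|t-s|^\beta$ after pulling out a power of $T$; the extra $s^\beta$ factor is harmless (bounded by $T^\beta$). The second, "regularizing" integral is the delicate one: one uses the first line of \eqref{eq30}, $|S(t-r)-S(s-r)|_{L(V,V)}\le c(t-s)^\gamma(s-r)^{-\gamma}$ (choosing $\gamma=\beta$, allowed since $0\le\gamma\le1$), together with an analogous difference estimate for the fractional derivative $D_{0+}^\alpha(S(t-\cdot)-S(s-\cdot))G(u(\cdot))[r]$; the structure of the proof of Lemma \ref{l8} (and the estimate referenced there as \eqref{l11}) gives a bound of the form $c(1+\|u\|_{\beta,\sim})(t-s)^\beta r^{-\alpha}((s-r)^{-\beta}+\dots)$ on $[0,s]$, which after integration against $|D_{s-}^{1-\alpha}\omega_{s-}[r]|\le c|||\omega|||_{\beta',0,s}(s-r)^{\alpha+\beta'-1}$ and the substitution $r=sv$ produces $s^{-\beta}\cdot(t-s)^\beta\cdot s^{\beta'}\int_0^1(\cdots)dv$; the $s^{-\beta}$ cancels the prefactor $s^\beta$ in the seminorm, and the $s^{\beta'}$ with the exponential weight feeds Lemma \ref{l16} once more. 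Again replacing $u$ by $u(0)+(u(\cdot)-u(0))$ converts $\|u\|_{\beta,\sim}$ to $\|u\|_{\beta,\rho,\sim}$ with exponential weights absorbed into $K(\rho)$.

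The main obstacle is the difference estimate for the fractional derivative of $r\mapsto(S(t-r)-S(s-r))G(u(r))$ on $[0,s]$: one must show it is integrable near $r=0$ (the $r^{-\alpha}$ singularity, fine since $\alpha<\beta<1$ and $\alpha p<1$) and near $r=s$ (where the semigroup difference degenerates), and crucially that it carries the full factor $(t-s)^\beta$ uniformly. This requires combining both lines of \eqref{eq30} with the Lipschitz and boundedness properties of $G$ from Lemma \ref{l2}, splitting the defining integral of $D^\alpha$ at $r-\varepsilon$-type thresholds, and being careful that every resulting exponent on $(s-r)$, $(t-r)$ and $r$ stays $>-1$ so the $v$-integrals converge for $p$ close to $1$ (equivalently $\alpha$ close to $1-\beta'$). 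Once all pieces are assembled, collecting the finitely many $K(\rho)$-contributions, bounding the remaining powers of $T$ by a constant $c_T$, and using $|u(0)|\le\|u\|_{\beta,\rho,\sim}$ yields \eqref{eq20}.
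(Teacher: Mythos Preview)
Your approach is essentially the paper's: the same decomposition of $\tT$ into $S(\cdot)u_0$ plus the stochastic convolution, the same split $v(t)-v(s)=\int_s^t S(t-r)G(u(r))\,d\omega+\int_0^s(S(t-r)-S(s-r))G(u(r))\,d\omega$, the same use of \eqref{eq30} for the semigroup differences, and the same reduction to $K(\rho)$ via Lemma~\ref{l16}. Two small points where the paper is cleaner. First, the detour through $u(r)=u(0)+(u(r)-u(0))$ is unnecessary; the paper simply writes $e^{-\rho t}=e^{-\rho(t-r)}e^{-\rho r}$ inside each integral and uses $e^{-\rho r}|u(r)|\le\|u\|_{\beta,\rho,\sim}$ (sup part) and $q^\beta e^{-\rho r}|u(r)-u(q)|\le(r-q)^\beta\|u\|_{\beta,\rho,\sim}$ (seminorm part) directly, which immediately produces the factor $e^{-\rho(t-r)}$ feeding $K(\rho)$. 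Second, for the $\int_s^t$ piece the prefactor $s^\beta$ is not merely ``harmless'': when you bound $|u(r)-u(q)|$ via the $\sim$-seminorm on $[0,T]$ you pick up a $q^{-\beta}\le s^{-\beta}$, and it is precisely the $s^\beta$ that cancels it (this is implicit in the paper's computation \eqref{l11}). With these two adjustments your plan matches the paper's proof.
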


\begin{proof} By the definition of the norm and of $\tT$,
\begin{align}\label{equ31}
\begin{split}
    \|\tT & (u,\omega,u_0)\|_{\beta,\rho,\sim}
    \leq \sup_{t\in[0,T]}e^{-\rho t}\bigg|\int_0^tS(t-r)G(u(r))d\omega\bigg|\\
    &+  \sup_{0< s<t\leq T} \frac{s^\beta e^{-\rho t}}{{|t-s|^\beta}} \bigg|\int_s^tS(t-r)G(u(r))d\omega\bigg|\\
   &+  \sup_{0< s<t\leq T} \frac{s^\beta e^{-\rho t}}{{|t-s|^\beta}}\bigg|\int_0^s(S(t-r)-S(s-r))G(u(r))d\omega\bigg|\\
    &+ \sup_{t\in[0,T]}e^{-\rho t}|S(t)u_0|+\sup_{0<s<t\leq T}s^\beta e^{-\rho t}\frac{|S(t)u_0-S(s)u_0|}{|t-s|^\beta}.
\end{split}
\end{align}

Since
\begin{equation*}
    |D_{t-}^{1-\alpha}\omega[r]|\le c |||\omega|||_{\beta^\prime,0,T}(t-r)^{\alpha+\beta^\prime-1},
\end{equation*}

by using the inequalities of Lemma \ref{l2} and \eqref{eq30} we get
\begin{align}\label{l11}
\begin{split}
   & s^\beta  e^{-\rho t}\bigg|\int_s^tS(t-r)G(u(r))d\omega\bigg|\\     &\le c s^\beta e^{-\rho t}
    \int_s^t\bigg(\frac{\|S(t-r)\|_{L(V)}\|G(u(r))\|_{L_2(V)}}{(r-s)^\alpha}\\
    &+\int_s^r\frac{\|S(t-r)-S(t-q)\|_{L(V)}\|G(u(r))\|_{L_2(V)}}{(r-q)^{1+\alpha}}dq\\
    &+\int_s^r\frac{\|S(t-q)\|_{L(V)}\|G(u(r))-G(u(q))\|_{L_2(V)}}{(r-q)^{1+\alpha}}dq
    \bigg)|||\omega|||_{\beta^\prime,0,T}(t-r)^{\alpha+\beta^\prime-1}dr\\
& \leq  c T^\beta  |||\omega|||_{\beta^\prime,0,T}\bigg(
\int_s^t e^{-\rho(t-r)}\frac{(c_G+c_{DG}|u(r)|)e^{-\rho r}}{(r-s)^{\alpha}}(t-r)^{\alpha+\beta^\prime-1}dr\\
&+\int_s^t\int_s^re^{-\rho(t-r)}\frac{ e^{-\rho r}(c_G+c_{DG}|u(r)|)(r-q)^{\beta}}{(t-r)^\beta(r-q)^{1+\alpha}}dq(t-r)^{\alpha+\beta^\prime-1}dr\\
&+\int_s^t\int_s^re^{-\rho(t-r)}\frac{ e^{-\rho r}c_{DG}|u(r)-u(q)|q^\beta}{(r-q)^{1+\alpha}q^{\beta}}dq(t-r)^{\alpha+\beta^\prime-1}dr\bigg)\\
&\le c T^\beta |||\omega|||_{\beta^\prime,0,T}(t-s)^{\beta^\prime}
(1+\|u\|_{\beta,\rho,\sim})\int_s^te^{-\rho(t-r)}(r-s)^{-\alpha}(t-r)^{\alpha-1}dr
\\
&+ cT^\beta |||\omega|||_{\beta^\prime,0,T}(1+\|u\|_{\beta,\rho,\sim})\int_s^te^{-\rho(t-r)}(r-s)^{\beta-\alpha}(t-r)^{\alpha+\beta^\prime-1-\beta}dr
\\
&+ c T^\beta |||\omega|||_{\beta^\prime,0,T}(t-s)^{\beta^\prime}
\|u\|_{\beta,\rho,\sim}\int_s^te^{-\rho(t-r)}(r-s)^{-\alpha}(t-r)^{\alpha-1}dr.
\end{split}
\end{align}

Performing a change of variable, it is easy to see that
\begin{align*}
&(t-s)^{\beta^\prime}\int_s^te^{-\rho(t-r)}(r-s)^{-\alpha}(t-r)^{\alpha-1}dr\\
=&(t-s)^{\beta^\prime-\beta}(t-s)^\beta \int_0^1 e^{-\rho(t-s)(1-v)} v^{-\alpha}(1-v)^{\alpha-1}dv=(t-s)^\beta K(\rho)
\end{align*}
taking in Lemma \ref{l16} $a=-\alpha,\,b=\alpha-1$, $d=\beta^\prime-\beta$ and $t-s$ as the corresponding $t$ there. The second integral on the right hand side may be rewritten in the same way, since

\begin{align*}
   \int_s^te^{-\rho(t-r)}(r-s)^{\beta-\alpha}(t-r)^{\alpha+\beta^\prime-1-\beta}dr
   \leq (t-s)^{\beta^\prime}\int_s^te^{-\rho(t-r)}(r-s)^{-\alpha}(t-r)^{\alpha-1}dr.
\end{align*}
Therefore, coming back to (\ref{l11}) we obtain
\begin{align*}
    s^\beta & e^{-\rho t}\bigg|\int_s^tS(t-r)G(u(r))d\omega\bigg| \le c_T |||\omega|||_{\beta^\prime,0,T} (t-s)^\beta K(\rho) (1+\|u\|_{\beta,\rho,\sim}).
\end{align*}

In a similar manner than before for the first expression on the right hand side of \eqref{equ31} we obtain
\begin{equation*}
    e^{-\rho t}\bigg|\int_0^tS(t-r)G(u(r))d\omega\bigg|\ \leq c_T|||\omega|||_{\beta^\prime,0,T}K(\rho)(1+\|u\|_{\beta,\rho,\sim}).
\end{equation*}

For the third term on the right hand side of (\ref{equ31}) we should follow similar steps than before when obtaining (\ref{l11}). Now we need to replace the
estimates for $\|S(t-r)\|_{L(V)}$ and $\|S(t-r)-S(t-q)\|_{L(V)}$ by estimates for $\|S(t-r)-S(s-r)\|_{L(V)}$ and $\|S(t-r)-S(t-q)-(S(s-r)-S(s-q))\|_{L(V)}$ respectively, for which we use \eqref{eq2} and \eqref{eq30} for appropriate parameters. Then it is not hard to see that for $\alpha^\prime+\beta<\alpha+\beta^\prime,\,0<\alpha<\alpha^\prime<1$:

\begin{align*}
\begin{split}
s^\beta e^{-\rho t} & \bigg|\int_0^s(S(t-r)-S(s-r))G(u(r))d\omega\bigg|
\\
   &\le c(t-s)^\beta|||\omega|||_{\beta^\prime,0,T}T^\beta\bigg(
\int_0^s e^{-\rho(t-r)}\frac{(c_G+c_{DG}|u(r)|)e^{-\rho r}}{(s-r)^{\alpha+\beta}}(s-r)^{\alpha+\beta^\prime-1}dr\\
&+\int_0^s\int_0^re^{-\rho(t-r)}\frac{ e^{-\rho r}(c_G+c_{DG}|u(r)|)(r-q)^{\alpha^\prime}}{(s-r)^{\alpha^\prime+\beta}(r-q)^{1+\alpha}}dq(s-r)^{\alpha+\beta^\prime-1}dr\\
&+\int_0^s\int_0^re^{-\rho(t-r)}\frac{ e^{-\rho r}c_{DG}|u(r)-u(q)|q^\beta}{(s-r)^\beta (r-q)^{1+\alpha}q^\beta}dq(s-r)^{\alpha+\beta^\prime-1}dr\bigg)\\
&\le c(t-s)^\beta T^\beta|||\omega|||_{\beta^\prime,0,T}(1+\|u\|_{\beta,\rho,\sim})\int_0^se^{-\rho(t-r)}(s-r)^{\beta^\prime-1-\beta}dr
\\
&+ c(t-s)^\beta T^\beta|||\omega|||_{\beta^\prime,0,T}(1+\|u\|_{\beta,\rho,\sim})\int_0^se^{-\rho(t-r)}r^{\alpha^\prime-\alpha}(s-r)^{\alpha+\beta^\prime-1-\alpha^\prime-\beta}dr
\\
&+ c(t-s)^\beta T^\beta|||\omega|||_{\beta^\prime,0,T}
\|u\|_{\beta,\rho,\sim}\int_0^se^{-\rho(t-r)}r^{-\alpha}(s-r)^{\alpha-\beta+\beta^\prime-1}dr.
\end{split}
\end{align*}
The third integral on the right hand side of the last inequality can be estimated by

\begin{equation*}
    s^{\beta^\prime-\beta}\int_0^1e^{-\rho s(1-v)}v^{-\alpha}(1-v)^{\alpha-1}dv
\end{equation*}
and in a similar manner the other integrals.
All the previous estimates imply that

\begin{equation*}
    \bigg\|\int_s^tS(t-r)G(u(r))d\omega\bigg\|_{\beta,\rho,\sim}\le c_T|||\omega|||_{\beta^\prime,0,T}K(\rho)(1+\|u\|_{\beta,\rho,\sim}).
\end{equation*}

Now we estimate the terms concerning the initial data. Notice that because of the properties of the semigroup (\ref{eq1}) and (\ref{eq2}) we have
\begin{align*}
{|S(t)u_0-S(s)u_0|}\leq {|(S(t-s)-{\rm{id}}) S(s)u_0|}\leq {s^{-\beta} (t-s)^{\beta}|u_0|},
\end{align*}
and therefore
\begin{align*}\begin{split}
 \sup_{t\in[0,T]}e^{-\rho t}|S(t)u_0|+\sup_{0< s<t\leq T}s^\beta e^{-\rho t}\frac{|S(t)u_0-S(s)u_0|}{|t-s|^\beta}\leq c|u_0|.
\end{split}
\end{align*}
Finally we note that $S(\cdot)u_0\in V$ is continuous.
\end{proof}

\begin{remark}\label{r1}
(i) The reason to study the problem of existence and uniqueness in the space $C^{\beta,\sim}([0,T];V)$ is coming from the fact that $S$ is not $\beta$-H{\"o}lder-continuous at zero in $V$. However, assuming that $u_0\in V_\beta$ then we could study the problem of existence and uniqueness in the Banach space $C^{\beta}([0,T];V)$, since then
\begin{align*}
{|S(t)u_0-S(s)u_0|}\leq {|(S(t-s)-{\rm{id}}) S(s)u_0|}\leq { (t-s)^{\beta}|u_0|_{V_\beta}}.
\end{align*}
Indeed, all appearing integrals of (\ref{eq7}) can be estimated
in this space. We omit the proof but, in fact, the factor $s^\beta$ is never used for the estimate of the H{\"o}lder-norm of the integrals.
\medskip

(ii) Following the same steps than in the proof of the above lemma, for every $T>0$ and $t\le T$, there exists a $c_T>0$ such that for $u\in C^{\beta,\sim}([0,T];V)$
we obtain
\begin{equation*}
   \bigg| \int_0^t S(t-r)G(u(r))d\omega\bigg|_{V_\beta}\le c_T |||\omega|||_{\beta^\prime,0,T}(1+\|u\|_{\beta,\sim}).
\end{equation*}

Moreover, by using (\ref{eq1}) and (\ref{eq30}), when replacing $u\in C^{\beta,\sim}([0,T];V)$ by $u\in C^{\beta}([0,T];V)$, we obtain that
\begin{equation*}
   \bigg| \int_0^t S(t-r)G(u(r))d\omega\bigg|_{V_\beta} \le c_T |||\omega|||_{\beta^\prime,0,T}(1+\|u\|_{\beta}),
\end{equation*}
where the constant $c_T$ is independent of $u$ and $\omega$.
Hence we obtain the following estimate when considering the $V$-norm:
\begin{equation*}
   \bigg| \int_0^t S(t-r)G(u(r))d\omega\bigg| \le c_T |||\omega|||_{\beta^\prime,0,T}(1+\|u\|_{\beta}),
\end{equation*}
where this constant $c_T$ is also independent of $u$ and $\omega$.

(iii) Assuming that $u_0\in V$ the corresponding solution $u$ of (\ref{eq7}) satisfies $u(t)\in V_\beta$ for every $t>0$.  The proof of this assertion follows immediately, since for $t>0$,

\begin{align*}
|u(t)|_{V_\beta}&\leq |S(t)u_0|_{V_\beta}+\bigg| \int_0^t S(t-r)G(u(r))\bigg|_{V_\beta} \\
&\leq ct^{-\beta}|u_0|+c_T |||\omega|||_{\beta^\prime,0,T}(1+\|u\|_{\beta,\sim})<\infty.
\end{align*}

\end{remark}

Now we study the norm of $\tT(u_1,\omega,u_{01})-\tT(u_2,\omega,u_{02})$ in $C^{\beta,\sim}([0,T];V)$. In order to do that, we apply the same techniques than above with the difference that now it is necessary to use the last inequality for $G$ given in Lemma \ref{l2}.

\begin{lemma}\label{l5}
For every $T>0$ there exists a $c_T>0$ such that for every $\rho>0$, $\omega\in \Omega$ and $u_1, u_2 \in C^{\beta,\sim}([0,T];V)$ with $u_1(0)=u_{01},\,u_2(0)=u_{02}$, with $u_{01},\,u_{02}\in V$,

\begin{align}\label{eq12}
\begin{split}
&\|\tT(u_1,\omega,u_{01})-\tT(u_2,\omega,u_{02})\|_{\beta,\rho,\sim} \\\le &  c_T|||\omega|||_{\beta^\prime,0,T}(1+ \|u_1\|_{\beta,\sim}+\|u_2\|_{\beta,\sim})K(\rho)\|u_1-u_2\|_{\beta,\rho,\sim}+c|u_{01}-u_{02}|.
\end{split}
\end{align}
\end{lemma}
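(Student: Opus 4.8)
The plan is to mimic the structure of the proof of Lemma~\ref{l6}, but now tracking differences rather than sizes, so that the key point is replacing the bounds for $G$ coming from the first two inequalities of Lemma~\ref{l2} by the third (Lipschitz-type and second-order) inequality. Write $u=u_1-u_2$, $u_0=u_{01}-u_{02}$, and expand $\|\tT(u_1,\omega,u_{01})-\tT(u_2,\omega,u_{02})\|_{\beta,\rho,\sim}$ exactly as in \eqref{equ31}, with the three stochastic-integral pieces (the $\int_0^t$ term, the $\int_s^t$ term, and the $\int_0^s(S(t-r)-S(s-r))\cdot$ term) plus the two initial-data terms. The initial-data terms are handled verbatim as at the end of the proof of Lemma~\ref{l6}, giving the $c|u_{01}-u_{02}|$ summand.

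For each stochastic-integral piece, I would estimate $\|D_{T_1+}^\alpha S(t-\cdot)(G(u_1(\cdot))-G(u_2(\cdot)))[r]\|_{L_2(V)}$ by the same three-term decomposition of the Weyl derivative as in \eqref{l11}: the pointwise term with $\|G(u_1(r))-G(u_2(r))\|_{L_2(V)}$, the term where the semigroup difference acts on $G(u_1(r))-G(u_2(r))$, and the term with $\|(G(u_1(r))-G(u_1(q)))-(G(u_2(r))-G(u_2(q)))\|_{L_2(V)}$. For the first two use the second inequality of Lemma~\ref{l2}, $\|G(u_1(r))-G(u_2(r))\|_{L_2(V)}\le c_{DG}|u(r)|$, which after inserting the weight $e^{-\rho r}$ and the factor $q^\beta/q^\beta$ (exactly as before) produces $\|u\|_{\beta,\rho,\sim}$. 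For the third, apply the last inequality of Lemma~\ref{l2} with $v_i\leftrightarrow u_i(q)$:
\begin{equation*}
\|(G(u_1(r))-G(u_1(q)))-(G(u_2(r))-G(u_2(q)))\|_{L_2(V)}\le c_{DG}|u(r)-u(q)|+c_{D^2G}|u(r)-u(q)|\,(|u_1(r)-u_1(q)|+|u_2(r)-u_2(q)|).
\end{equation*}
The first summand here yields $\|u\|_{\beta,\rho,\sim}$ against the $(r-q)^\beta$ in the numerator, exactly as in \eqref{l11}; the $c_{D^2G}$ summand produces the extra factor $\|u_1\|_{\beta,\sim}+\|u_2\|_{\beta,\sim}$ — and this is where the $(1+\|u_1\|_{\beta,\sim}+\|u_2\|_{\beta,\sim})$ in the claimed bound comes from — while one of the two $|u_i(r)-u_i(q)|$ factors again supplies a $(r-q)^\beta$ to be absorbed into the $(r-q)^{1+\alpha}$ denominator. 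One subtlety to check is the weight bookkeeping in this cross term: only $\|u\|_{\beta,\rho,\sim}$ should carry the $\rho$-dependent exponential (that is the factor we want to shrink via $K(\rho)$), whereas $\|u_i\|_{\beta,\sim}$ appears without the $\rho$-weight; since $u_i(r)-u_i(q)=(u_i(r)-u_i(q))$ is bounded pointwise by $q^{-\beta}|r-q|^\beta\|u_i\|_{\beta,\sim}$ and $e^{-\rho r}\le 1$, this is consistent with the statement. After these substitutions, the remaining $r$- and $q$-integrals are literally the same ones appearing in \eqref{l11} and in the third-term estimate of Lemma~\ref{l6}, so they reduce, via the change of variables and Lemma~\ref{l16} (same parameters $a=-\alpha$, $b=\alpha-1$, $d=\beta'-\beta$, etc.), to $(t-s)^\beta K(\rho)$ for the H\"older part and $K(\rho)$ for the supremum part.

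Collecting the three integral pieces gives $\le c_T|||\omega|||_{\beta',0,T}(1+\|u_1\|_{\beta,\sim}+\|u_2\|_{\beta,\sim})K(\rho)\|u_1-u_2\|_{\beta,\rho,\sim}$, and adding the initial-data contribution yields \eqref{eq12}. The main obstacle, such as it is, is purely bookkeeping: making sure that in the second-order term exactly one $\rho$-exponential is retained (on the $u_1-u_2$ factor) so that the $K(\rho)$ gain is preserved, and that the $\|u_i\|_{\beta,\sim}$ factors are correctly pulled out with the $q^{-\beta}$ singularity matched against the $(r-q)^\beta$, $s^\beta$ or $q^\beta$ weights already present — exactly as in the handling of the third term in the proof of Lemma~\ref{l6}. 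No new analytic ingredient beyond Lemmas~\ref{l2}, \ref{l16} and the semigroup estimates \eqref{eq1}, \eqref{eq2}, \eqref{eq30} is needed.
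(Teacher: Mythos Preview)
Your strategy is exactly the paper's: repeat the proof of Lemma~\ref{l6} verbatim on the difference $\tT(u_1,\omega,u_{01})-\tT(u_2,\omega,u_{02})$, using the third inequality of Lemma~\ref{l2} in place of the first two, and reduce to the same $K(\rho)$-type integrals via Lemma~\ref{l16}. The initial-data piece and the three stochastic-integral pieces are handled just as you describe.

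There is, however, a concrete slip in your application of Lemma~\ref{l2}. With the substitution $u_1\to u_1(r)$, $v_1\to u_1(q)$, $u_2\to u_2(r)$, $v_2\to u_2(q)$, the $c_{D^2G}$ term of Lemma~\ref{l2} reads
\[
c_{D^2G}\,|u_1(r)-u_2(r)|\,\big(|u_1(r)-u_1(q)|+|u_2(r)-u_2(q)|\big),
\]
i.e.\ the first factor is the \emph{pointwise} value $|u(r)|$, not the increment $|u(r)-u(q)|$ that you wrote. This matters: with two increment factors one is led to a $q^{-2\beta}$ singularity that the available $s^\beta$ weight does not compensate (recall $\beta>1/2$). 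With the correct factor $|u(r)|$, the bookkeeping you outline works cleanly --- $e^{-\rho r}|u(r)|\le \|u_1-u_2\|_{\beta,\rho,\sim}$ via the supremum part of the $\rho$-norm, while $|u_i(r)-u_i(q)|\le q^{-\beta}(r-q)^\beta\|u_i\|_{\beta,\sim}$ supplies the $(r-q)^\beta$ to cancel against $(r-q)^{1+\alpha}$. This is precisely what the paper does; your phrase ``one of the two $|u_i(r)-u_i(q)|$ factors'' should be read accordingly (there is only one increment factor in the product, the other being a pointwise value). Once this is corrected, your proof is the paper's proof.
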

\begin{proof}
As we have already mentioned, the proof follows by using the same techniques as in the proof of Lemma \ref{l6} and taking into account Lemma \ref{l2}. We only show how to manage the following term:
\begin{align*}
   & s^\beta e^{-\rho t}\bigg|\int_s^tS(t-r)(G(u_1(r))-G(u_2(r)))d\omega\bigg|\\
    &\le c_T|||\omega|||_{\beta^\prime,0,T}\bigg(
\int_s^t e^{-\rho(t-r)}(t-r)^{\alpha+\beta^\prime-1}\bigg(\frac{c_{DG}e^{-\rho r}|u_1(r)-u_2(r)|}{(r-s)^{\alpha}}\\
&+\int_s^r\frac{c_{DG}e^{-\rho r}|u_1(r)-u_2(r)|  (r-q)^\beta}{(r-q)^{1+\alpha}}dq\\
&+\int_s^r\frac{c_{DG} e^{-\rho r}|u_1(r)-u_1(q)-(u_2(r)-u_2(q))|q^\beta}{(r-q)^{1+\alpha}q^\beta}dq\\
&+\int_s^r\frac{c_{D^2G}e^{-\rho r}|u_1(r)-u_2(r)|(|u_1(r)-u_1(q)|+|u_2(r)-u_2(q)|)q^\beta}{(r-q)^{1+\alpha}q^\beta}dq\bigg)dr\\
&\leq c_T|||\omega|||_{\beta^\prime,0,T}  (t-s)^{\beta^\prime}
(1+\|u_1\|_{\beta,\sim}+\|u_2\|_{\beta,\sim}) \|u_1-u_2\|_{\beta,\rho,\sim}
\\&\qquad \qquad \times \int_s^te^{-\rho(t-r)}(r-s)^{-\alpha}(t-r)^{\alpha-1}dr\\
&\le c_T|||\omega|||_{\beta^\prime,0,T}(t-s)^{\beta}
(1+\|u_1\|_{\beta,\sim}+\|u_2\|_{\beta,\sim})\|u_1-u_2\|_{\beta,\rho,\sim} K(\rho),
\end{align*}
where $K(\rho)$ has been defined in the proof of lemma \ref{l6}.
\end{proof}

We are now in the position to prove the main theorem of this section.

\begin{theorem}\label{t1}
Let $u_0\in V$ and assume that $G$ satisfies the assumptions of Lemma \ref{l2}. Then for every $T>0$ the equation (\ref{eq7}) has a unique solution $u$ in $C^{\beta,\sim}([0,T];V)$.
\end{theorem}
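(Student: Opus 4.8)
The plan is to realize $u$ as a fixed point of the operator $\tT(\cdot,\omega,u_0)$ of \eqref{eq7}, which by Section 2 (in particular Lemma \ref{l8}) maps $C^{\beta,\sim}([0,T];V)$ into itself, via Banach's theorem in this Banach space (Lemma \ref{l1}) — but carried out locally on intervals of a fixed length and then concatenated. A single global application cannot work: the difference estimate \eqref{eq12} carries in front the \emph{unweighted} factor $(1+\|u_1\|_{\beta,\sim}+\|u_2\|_{\beta,\sim})$, and replacing it by its weighted counterpart on all of $[0,T]$ costs a factor of order $e^{\rho T}$ that the gain $K(\rho)\to0$ of Lemma \ref{l16} cannot beat. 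So first I would fix a length $T^{\ast}\in(0,T]$, depending only on $T$, on $|||\omega|||_{\beta^{\prime},0,T}$ and on $\beta,\beta^{\prime},\alpha$, small enough that the constant $c_{T^{\ast}}$ of Lemmas \ref{l6} and \ref{l5} obeys $c_{T^{\ast}}\,|||\omega|||_{\beta^{\prime},0,T}\,(T^{\ast})^{\beta^{\prime}-\beta}B(1-\alpha,\alpha)\le\tfrac12$; this is possible since $c_{T^{\ast}}$ stays bounded as $T^{\ast}\downarrow 0$ and, on any interval of length $T^{\ast}$, $K(\rho)\le K(0)=(T^{\ast})^{\beta^{\prime}-\beta}B(1-\alpha,\alpha)$. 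The point is that this condition involves neither $u_0$ nor the position of the interval, and by Remarks \ref{r2} and \ref{r3} the bounds \eqref{eq20} and \eqref{eq12} transfer, with the same constant, to every subinterval $[T_1,T_1+T^{\ast}]\subseteq[0,T]$ with $u_0$ replaced by the local initial value.

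For the local step, given $w\in V$ I would set $R:=1+2c|w|$, with $c$ the constant of \eqref{eq20}, and take $B_R\subset C^{\beta,\sim}([T_1,T_1+T^{\ast}];V)$ to be the closed set of $u$ with $u(T_1)=w$ and $\|u\|_{\beta,\sim}\le R$. By \eqref{eq20} in the limit $\rho\downarrow0$ and the choice of $T^{\ast}$ one has $\|\tT u\|_{\beta,\sim}\le\tfrac12(1+R)+c|w|=R$, so $\tT$ maps $B_R$ into itself (and $\tT u(T_1)=S(0)w=w$, while $S(\cdot-T_1)w\in C^{\beta,\sim}$ by the semigroup bounds already used in Lemma \ref{l6}). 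For the contraction I would pass to the equivalent norm $\|\cdot\|_{\beta,\rho,\sim}$: on $B_R$ we have $\|u_i\|_{\beta,\sim}\le R$, so \eqref{eq12} with equal initial values gives $\|\tT u_1-\tT u_2\|_{\beta,\rho,\sim}\le c_{T^{\ast}}\,|||\omega|||_{\beta^{\prime},0,T}(1+2R)\,K(\rho)\,\|u_1-u_2\|_{\beta,\rho,\sim}$, and by Lemma \ref{l16} I can pick $\rho=\rho(|w|)$ so large that the prefactor is $\le\tfrac12$. Since $B_R$ is closed in $C^{\beta,\sim}$ it is complete for the equivalent norm, so Banach's theorem yields a unique fixed point in $B_R$, i.e.\ a mild solution of \eqref{eq7} on $[T_1,T_1+T^{\ast}]$; uniqueness in the whole of $C^{\beta,\sim}([T_1,T_1+T^{\ast}];V)$ follows by re-running the contraction in a ball $B_{R^{\prime}}$ large enough to contain any two given solutions, with $\rho$ then adapted to $R^{\prime}$.

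To globalize, I would partition $[0,T]$ by $t_k=\min(kT^{\ast},T)$ for $0\le k\le N=\lceil T/T^{\ast}\rceil$, solve on $[0,t_1]$ with datum $u_0$, then on $[t_1,t_2]$ with datum $u(t_1)$, and so on; since $T^{\ast}$ is the same at every stage, $N$ steps exhaust $[0,T]$. By the additivity of the integral (Remark \ref{r2}) and the semigroup property the pieced-together $u$ solves \eqref{eq7} on $[0,T]$, and global uniqueness follows subinterval by subinterval from the local uniqueness. Finally $u\in C^{\beta,\sim}([0,T];V)$: the first piece lies in $C^{\beta,\sim}([0,t_1];V)$ by construction, and by Remark \ref{r1}(iii) $u(t)\in V_{\beta}$ for every $t>0$, hence (Remark \ref{r1}(i)) $u$ is genuinely $\beta$-H{\"o}lder on $[t_1,T]$; distinguishing whether $0<s<t\le T$ satisfy $s,t\le t_1$, $s,t\ge t_1$ or $s<t_1<t$ and combining the two bounds gives $\sup_{0<s<t\le T}s^{\beta}|u(t)-u(s)|/|t-s|^{\beta}<\infty$.

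The step I expect to be the real issue is the globalization, specifically excluding finite-time blow-up. Because of the unweighted factor in \eqref{eq12} one cannot contract on all of $[0,T]$, and a priori the length of a local step could shrink as its initial datum grows, which in general would leave room for blow-up before time $T$; this is usually handled through an a priori bound or a maximal-interval argument. Here it is cleaner: the \emph{self-mapping} condition that defines $T^{\ast}$ does not see the datum — only the radius $R=1+2c|w|$ and the contraction parameter $\rho$ depend on $|w|$ — so $T^{\ast}$ never shrinks and finitely many steps always suffice, and no a priori estimate is needed. The remaining points — transferring \eqref{eq20} and \eqref{eq12} to subintervals via Remarks \ref{r2}--\ref{r3}, and patching finitely many $C^{\beta,\sim}$ pieces using the smoothing $u(t)\in V_{\beta}$ for $t>0$ — are routine.
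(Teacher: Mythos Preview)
Your argument is correct, but your premise that a single global application of Banach's theorem ``cannot work'' is mistaken, and the paper's proof shows exactly how it does. The paper first chooses $\rho$ (not $\rho\to\infty$) so that $c_T|||\omega|||_{\beta',0,T}K(\rho)<\tfrac12$; by \eqref{eq20} this makes $\tT$ a self-map of the ball $B=\{\|u\|_{\beta,\rho,\sim}\le R\}$ with $R=1+2c|u_0|$. This $\rho$ is now \emph{fixed}, so the unweighted norms on $B$ satisfy $\|u\|_{\beta,\sim}\le e^{\rho T}R=:R_1$, a constant. Only then does the paper choose a \emph{second}, larger parameter $\rho_1$ so that $c_T|||\omega|||_{\beta',0,T}(1+2R_1)K(\rho_1)<\tfrac12$, which by \eqref{eq12} gives the contraction on $B$ in the $\|\cdot\|_{\beta,\rho_1,\sim}$-norm. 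The point you missed is that the self-map and the contraction need not use the same weight: since $R_1$ is frozen before $\rho_1$ is chosen, the factor $e^{\rho T}$ never competes with $K(\rho_1)$.

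Your local-and-concatenate route is a genuine alternative and all of its steps are sound: the choice of $T^\ast$ is independent of the datum so no blow-up can occur, the transfer of \eqref{eq20}--\eqref{eq12} to subintervals via Remark~\ref{r3} is legitimate, and the gluing into $C^{\beta,\sim}([0,T];V)$ using Remark~\ref{r1}(iii) works as you describe. It is, however, considerably more laborious than the paper's two-$\rho$ trick, which dispatches the whole thing in a single fixed-point step on $[0,T]$ with no concatenation or regularity patching. What your approach buys is robustness---it would adapt to settings where an a~priori self-map on all of $[0,T]$ is unavailable---but here that generality is not needed.
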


\begin{proof}
From Lemma \ref{l6}, taking $\rho$ large enough such that $c_T|||\omega|||_{\beta^\prime, 0,T}K(\rho)<\frac12$, we obtain that $\mathcal T$ maps the
ball

\begin{equation*}
    B:=\{u\in C^{\beta,\sim}([0,T];V):\|u\|_{\beta,\rho,\sim}\le R \},\quad \mbox{ with }\, R:=1+2c|u_0|
\end{equation*}

into itself. Furthermore, by the equivalence of the norms $\|\cdot\|_{\beta,\rho,\sim}$ and $\|\cdot\|_{\beta,\sim}$, there exists a constant $R_1$ such that

\begin{equation*}
    \sup_{u\in B}\|u\|_{\beta,\sim}\le e^{\rho T} \sup_{u\in B}\|u\|_{\beta,\rho, \sim}\leq Re^{\rho T}=: R_1.
\end{equation*}

Therefore, for $u_{01}=u_{02}=u_0$, it is possible to find a $\rho_1>\rho$ such that, if $u_1,u_2\in B$ then
\begin{align*}
\|\tT(u_1,\omega,u_{0})-\tT(u_2,\omega,u_{0})\|_{\beta,\rho_1,\sim}& \le  c_T|||\omega|||_{\beta^\prime,0,T}(1+ 2R_1) K(\rho_1)\|u_1-u_2\|_{\beta,\rho_1,\sim}\\
\le \frac12 \|u_1-u_2\|_{\beta,\rho_1,\sim}
\end{align*}

which follows by (\ref{eq12}). This means that $\tT(\cdot,\omega,u_0)$ is a contraction on $B$ and its fixed point then solves (\ref{eq7}).

To see that a solution is unique {\em in general} take two solutions $u_1,\,u_2$ such that $\|u_1\|_{\beta,\sim},\,\|u_2\|_{\beta,\sim}
\le R$. Then

\begin{align*}
    \|u_1-u_2\|_{\beta,\rho,\sim} & =\|\tT(u_1,\omega,u_0)-\tT(u_2,\omega,u_0)\|_{\beta,\rho,\sim}\\
    &\le
    c_T|||\omega|||_{\beta^\prime,0,T}(1+ 2 R) K(\rho)\|u_1-u_2\|_{\beta,\rho,\sim}<\frac12\|u_1-u_2\|_{\beta,\rho,\sim}
\end{align*}

for sufficiently large $\rho$. But the above inequality is only possible provided that $u_1=u_2$.
\end{proof}

We conclude this section proving that $u$ defines a non--autonomous dynamical system.

\begin{theorem}\label{t2}
The solution of (\ref{eq7}) generates a non--autonomous dynamical system $\phi:\mathbb{R}^+\times \Omega\times V\to V$ given by
\begin{equation*}
\phi(t,\omega,u_0)=S(t)u_{0}+\int_0^t
S(t-s)G(u(s))d\omega. \end{equation*}
Moreover,  $u_0\mapsto\phi(t,\omega,u_0)$ is continuous on $V$ for $t\ge 0$ and $\omega\in \Omega$, and for $t>0,\,\omega\in \Omega$ the mapping $u_0\mapsto\phi(t,\omega,u_0)$ is compact.
\end{theorem}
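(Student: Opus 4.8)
The plan is to verify, in order: that $\phi$ is well defined on $\mathbb{R}^+\times\Omega\times V$, that it satisfies the cocycle property \eqref{cocycle}, and then the stated continuity and compactness of $\phi(t,\omega,\cdot)$. Well-definedness is immediate from Theorem \ref{t1}: \eqref{eq7} has a unique solution $u=u(\cdot;\omega,u_0)\in C^{\beta,\sim}([0,T];V)$ on every $[0,T]$, and since the restriction to $[0,T_1]$ of the solution on $[0,T_2]$ solves \eqref{eq7} on $[0,T_1]$, by uniqueness these solutions are consistent, so $\phi(t,\omega,u_0):=u(t;\omega,u_0)\in V$ does not depend on the choice of $T\ge t$; also $\phi(0,\omega,u_0)=S(0)u_0=u_0$. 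To obtain \eqref{cocycle} I would fix $t,\tau\ge 0$, let $u$ be the solution on $[0,t+\tau]$, and set $v(s):=u(\tau+s)$ for $s\in[0,t]$. One first checks $v\in C^{\beta,\sim}([0,t];V)$: for $\tau>0$ the weight $s^\beta$ in the norm of $C^{\beta,\sim}$ is bounded below on $[\tau,t+\tau]$, so $u$ is genuinely $\beta$-H\"older there (cf. Remark \ref{r1}(iii)), while for $\tau=0$ there is nothing to prove. The crux is then to show that $v$ is the mild solution on $[0,t]$ driven by $\theta_\tau\omega$ with initial value $u(\tau)$.

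To do this I would start from $v(s)=S(\tau+s)u_0+\int_0^{\tau+s}S(\tau+s-r)G(u(r))d\omega$ and split the integral at $r=\tau$ by the additivity of the pathwise integral (Remark \ref{r2}). On $[0,\tau]$ I write $S(\tau+s-r)=S(s)S(\tau-r)$ and pull the bounded operator $S(s)$ out of the integral; this is legitimate because the integral in \eqref{eq36} is defined component-wise and $S(s)$, being bounded and linear, commutes with $D_{0+}^\alpha$ and with the $r$-integration. On $[\tau,\tau+s]$ I apply the change of variables of Remark \ref{r3} with shift $\tau$, which turns the integrand into $S(s-r)G(v(r))$ and the integrator into $\theta_\tau\omega$. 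Using $S(\tau+s)u_0=S(s)S(\tau)u_0$, the $u_0$-term and the integral over $[0,\tau]$ recombine into $S(s)\big(S(\tau)u_0+\int_0^\tau S(\tau-r)G(u(r))d\omega\big)=S(s)u(\tau)$, so that
\[
v(s)=S(s)u(\tau)+\int_0^s S(s-r)G(v(r))d\theta_\tau\omega,\qquad s\in[0,t].
\]
Since $\theta_\tau\omega\in\Omega$ by invariance and $u(\tau)=\phi(\tau,\omega,u_0)\in V$, Theorem \ref{t1} forces $v=u(\cdot;\theta_\tau\omega,\phi(\tau,\omega,u_0))$; evaluating at $s=t$ gives $\phi(t+\tau,\omega,u_0)=u(t+\tau)=v(t)=\phi(t,\theta_\tau\omega,\phi(\tau,\omega,u_0))$, which is \eqref{cocycle}.

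For the continuity of $u_0\mapsto\phi(t,\omega,u_0)$ I would fix $M>0$ and consider $|u_{01}|,|u_{02}|\le M$. As in the proof of Theorem \ref{t1}, the corresponding solutions lie in a ball of $C^{\beta,\sim}([0,T];V)$ whose radius is controlled by $M$, $T$ and $\omega$, hence $\|u_1\|_{\beta,\sim}+\|u_2\|_{\beta,\sim}\le 2R_1$ for some $R_1=R_1(M,T,\omega)$. Lemma \ref{l5} then yields
\[
\|u_1-u_2\|_{\beta,\rho,\sim}\le c_T|||\omega|||_{\beta^\prime,0,T}(1+2R_1)K(\rho)\|u_1-u_2\|_{\beta,\rho,\sim}+c|u_{01}-u_{02}|,
\]
and choosing $\rho$ so large that $c_T|||\omega|||_{\beta^\prime,0,T}(1+2R_1)K(\rho)<1/2$ gives $\|u_1-u_2\|_{\beta,\sim}\le 2c\,e^{\rho T}|u_{01}-u_{02}|$. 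Since $|u_i(t)|\le\|u_i\|_{\beta,\sim}$, the map $u_0\mapsto\phi(t,\omega,u_0)$ is locally Lipschitz, in particular continuous, on $V$.

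Finally, for the compactness of $u_0\mapsto\phi(t,\omega,u_0)$ with $t>0$ I would invoke Remark \ref{r1}(iii): $\phi(t,\omega,u_0)=u(t)\in V_\beta$ with $|u(t)|_{V_\beta}\le c\,t^{-\beta}|u_0|+c_T|||\omega|||_{\beta^\prime,0,T}(1+\|u\|_{\beta,\sim})$. For $u_0$ in a bounded subset of $V$ the right-hand side is bounded by the previous step, so $\phi(t,\omega,\cdot)$ sends bounded sets of $V$ into bounded sets of $V_\beta$. Since $-A$ has compact inverse with eigenvalues tending to infinity, $(-A)^{-\beta}$ is compact and therefore $V_\beta\hookrightarrow V$ is a compact embedding, so those images are relatively compact in $V$; this is exactly the compactness of $\phi(t,\omega,\cdot)$. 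The main obstacle of the whole proof is the manipulation of the pathwise integral in the cocycle step --- combining additivity, the semigroup factorization with the extraction of the bounded operator $S(s)$, and the shift formula of Remark \ref{r3} --- together with checking that the shifted function $v$ still lies in $C^{\beta,\sim}$ so that the uniqueness theorem applies; the continuity and compactness assertions are then routine consequences of Lemma \ref{l5} and Remark \ref{r1}(iii).
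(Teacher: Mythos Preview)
Your proof is correct and follows essentially the same route as the paper: additivity (Remark \ref{r2}) plus the shift formula (Remark \ref{r3}) for the cocycle identity, and Remark \ref{r1}(iii) together with the compact embedding $V_\beta\hookrightarrow V$ for compactness. The only noticeable difference is in the continuity step: the paper dispatches it in one line by invoking the parametric version of the Banach fixed-point theorem, whereas you derive an explicit local Lipschitz estimate from Lemma \ref{l5}; your argument is more concrete but is exactly what underlies the parametric fixed-point statement, so the two are equivalent. You also supply details the paper leaves implicit --- the check that $v=u(\cdot+\tau)\in C^{\beta,\sim}([0,t];V)$ so that Theorem \ref{t1} applies, and the justification for factoring the bounded operator $S(s)$ through the pathwise integral --- which are genuine (if routine) points worth recording.
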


\begin{proof}

In order to prove that $\phi$ is a  cocycle we will make use of Remark \ref{r3}, which establishes how the integral with H\"older continuous integrator behaves when making a change of variable: for $t,\,\tau\in \mathbb{R}^+$, $\omega\in \Omega$ and
$u_0\in V$,
\begin{equation*}
\int_{\tau}^{t+\tau}S(t+\tau-s)G(u(s))d\omega(s)=\int_{0}^{t}S(t-r)G(u(r+\tau))d\theta_{\tau}
\omega(r).
\end{equation*}
Then, for $t,\,\tau\in \mathbb{R}^+$ and $\omega\in \Omega$,
\begin{align*}
\varphi  & (t+\tau, \omega,u_0) =
S(t+\tau)u_{0}+\int_{0}^{t+\tau}S(t+\tau-s)
G(u(s))d\omega(s) \\
&=S(t) \left ( S(\tau)u_{0}+\int_{0}^{\tau}S(\tau-s)G(u(s))d\omega(s) \right) +\int_{0}^{t}S(t-r)G(u(r+\tau))d\theta_{\tau}
\omega(r).
\end{align*}
Therefore, setting $y(\cdot)=u(\cdot+\tau)$ on $[0,t]$,
\begin{align*}
\varphi & (t+\tau, \omega,u_0)=S(t)y(0)+\int_{0}^{t}S(t-r)G(y(r))d\theta_{\tau} \omega(r)\\
&=\varphi(t,\theta_\tau\omega,\cdot)\circ\varphi(\tau,\omega,u_0).
\end{align*}
It is trivial that $\varphi(0,\omega,u_0)=u_0$ and, by the parameter version of the fixed
point theorem, for fixed $(t,\omega)$, the fixed point depends
continuously on $u_0\in V$.

To see the compactness of $\phi$ for $t>0$ we consider for some $\eta>0$ the set $B(0,\eta)\subset V$. With respect to the proof of Theorem \ref{t1} we consider a $\rho$ such that $c_T|||\omega|||_{\beta^\prime,0,T}K(\rho)<1/2$. Then we know that for any
solution $u$ of \eqref{eq6} and initial condition in the ball $B(0,\eta)$ we have that $\|u\|_{\beta,\rho,\sim}\le 1+2c\eta$,
hence  $\|u\|_{\beta,\sim}\le e^{\rho T}(1+2c \eta)$. Then the compactness follows therefore by Remark \ref{r1} (iii).
\end{proof}

\section{Random dynamical system for SPDEs driven by an fBm with $H>1/2$.}\label{sRDS}

In this section we study the non-autonomous dynamical system under measurability assumptions, in order to get a random dynamical system (see Subsection \ref{s2} for the general definition). In particular, we need to introduce a metric dynamical system, and for that, as driving process for our equation, we choose  a $V$-valued fractional Brownian motion, which definition is given below. Notice that, as pointed out in the Introduction section, in order to get the cocycle property in this random setting it is crucial that the stochastic integrals (i.e., the integrals with fractional Brownian motion as  integrators), are defined in a pathwise way. This is a qualitative difference to the definition of the classical stochastic integral where the integrand is a white noise. \\


Given $H\in (0,1)$, a continuous centered Gau{\ss}ian process
$\beta^H(t)$, $t\in\mathbb{R}$, with the covariance function
\begin{equation*}
    \mathbb{E}\beta^H(t)\beta^H(s)=\frac{1}{2}(|t|^{2H}+|s|^{2H}-|t-s|^{2H}),\qquad t,\,s\in\mathbb{R}
\end{equation*}
on an appropriate probability space $(\Omega,\fF,\PP)$ is called a {\it two--sided one-dimensional fractional Brownian
motion}, and $H$ is the {\it Hurst parameter}.

\vskip0.1in

Assume that $Q$ is a bounded and symmetric linear operator on $V$ which is of trace class, i.e., for the complete orthonormal basis $(e_i)_{i\in {\mathbb N}}$ in $V$ there exists a sequence of nonnegative numbers $(q_i)_{i\in {\mathbb N}}$ such that $tr Q:=\sum_{i=1}^{\infty}q_i <\infty$. Then a continuous {\it $V$-valued fractional Brownian motion $B^H$} with  covariance operator $Q$ and Hurst parameter $H$ is defined by

\begin{equation*}
   B^H(t)=\sum_{i=1}^{\infty} \sqrt{q_i}e_i \beta_i^H(t),\quad t\in\mathbb{R},
\end{equation*}
where $(\beta_i^H(t))_{i\in{\mathbb N}}$ is a sequence of stochastically independent one-dimensional fBm.  Notice that the above series is convergent in $L^2(\Omega, \mathcal F, \mathbb P)$ since $\sum_{i=1}^{\infty}q_i <\infty$ and $\mathbb{E}(\beta^H_i(t))^2=|t|^{2H}$ for $t\in \mathbb R$.

When $H=1/2$, $B^H(t)$ is the standard Brownian motion. \\

Using the definition of $B^H(t)$, Kolmogorov's theorem ensures that $B^H$ has a continuous version. Thus we can consider the canonical interpretation of an fBm:  let
$C_0(\mathbb{R},V)$,  the space of continuous functions on $\mathbb{R}$ with values in $V$ such that are zero at zero, equipped with the compact open topology. Let $\mathcal{F}=\bB(C_0(\RR,V))$ be the associated Borel-$\sigma$-algebra and ${\mathbb P}$ the distribution of the fBm $B^H$, and $(\theta_t)_{t\in \mathbb R}$ be the flow of Wiener shifts given by (\ref{shift}).

With this choice the first part of the random dynamical system definition is achieved:

\begin{lemma}\label{l4}
$(C_0(\RR,V),\bB(C_0(\RR,V)),\PP,\theta)$ is an ergodic metric dynamical system.
\end{lemma}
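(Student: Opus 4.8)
The plan is to verify the three defining properties of a metric dynamical system for the quadruple $(C_0(\RR,V),\bB(C_0(\RR,V)),\PP,\theta)$, namely: (i) the flow $\theta$ is measurable as a map $\RR\times C_0(\RR,V)\to C_0(\RR,V)$; (ii) $\PP$ is invariant under $\theta_t$ for every $t\in\RR$; and (iii) $\PP$ is ergodic with respect to $(\theta_t)_{t\in\RR}$. For (i), the map $(t,\omega)\mapsto\theta_t\omega=\omega(t+\cdot)-\omega(t)$ is jointly continuous from $\RR\times C_0(\RR,V)$ to $C_0(\RR,V)$ in the compact-open topology — continuity in $t$ follows from uniform continuity of $\omega$ on compacta, and continuity in $\omega$ is immediate from the definition of the compact-open topology — hence a fortiori $\bB(\RR)\otimes\bB(C_0(\RR,V)),\bB(C_0(\RR,V))$-measurable; one also records that $\theta_t$ preserves $C_0$, i.e.\ $\theta_t\omega(0)=0$.

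For the invariance (ii), the key point is that the finite-dimensional distributions of the process $B^H$ are stationary under the Wiener shift. For each coordinate $\beta_i^H$, the process $\{\beta_i^H(t+s)-\beta_i^H(t):s\in\RR\}$ has the same law as $\{\beta_i^H(s):s\in\RR\}$, because a centered Gaussian process is determined by its covariance and the covariance
\begin{equation*}
\EE\big(\beta_i^H(t+s_1)-\beta_i^H(t)\big)\big(\beta_i^H(t+s_2)-\beta_i^H(t)\big)=\tfrac12\big(|s_1|^{2H}+|s_2|^{2H}-|s_1-s_2|^{2H}\big)
\end{equation*}
is invariant under the shift by $t$ (a direct computation from the covariance formula given above). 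By independence of the coordinates and the fact that $B^H=\sum_i\sqrt{q_i}e_i\beta_i^H$ converges in $L^2(\Om)$, the law of $\theta_tB^H$ equals the law of $B^H$; since $\PP$ is by definition the pushforward of the law of $B^H$ onto $C_0(\RR,V)$, this says precisely $\theta_t\PP=\PP$.

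For ergodicity (iii), I would invoke the classical fact that a stationary Gaussian process whose spectral measure has no atoms is mixing, hence ergodic, and mixing of each scalar factor transfers to the product. Concretely: for the two-sided fractional Brownian motion the stationary increment process is mixing (equivalently, the associated Gaussian shift is mixing because the covariance of increments tends to zero at infinity); an infinite product of mixing measure-preserving systems, realized here through the independent coordinates $\beta_i^H$, is again mixing, and mixing implies ergodicity. One must be slightly careful that the relevant $\sigma$-algebra $\bB(C_0(\RR,V))$ is generated by the coordinate evaluations $\omega\mapsto\omega(t)$, so that cylinder sets are dense and it suffices to check the mixing/ergodicity condition on finite-dimensional cylinders, where it reduces to the scalar statements just mentioned. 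I expect this last point — giving a clean self-contained justification of ergodicity of the infinite-dimensional fBm shift, rather than merely citing it — to be the main obstacle; in practice one either refers to the literature on ergodicity of fractional Gaussian noise (e.g.\ via the decay of correlations of increments and Maruyama's mixing theorem for stationary Gaussian processes) or reduces to a one-dimensional two-sided fBm and appeals to the known result there, then lifts it to the product by independence.
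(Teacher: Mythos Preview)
Your sketch is correct in outline and is essentially the content of the references the paper defers to: the paper itself does not give a proof of this lemma but simply cites \cite{MasSchm04} and \cite{GS11}, noting that the result holds for any $H\in(0,1)$. Your three-step verification --- joint continuity of $(t,\omega)\mapsto\theta_t\omega$ in the compact-open topology, invariance via stationarity of increments of each coordinate $\beta_i^H$ lifted by independence to the $V$-valued process, and ergodicity via mixing of fractional Gaussian noise (Maruyama's criterion) transferred through the independent product --- is precisely the standard route and is what those references carry out. The one place where care is genuinely required, and which you correctly flag, is step~(iii): since $B^H$ is not itself stationary one must pass to the increment process (fractional Gaussian noise), verify mixing there from the decay of the increment covariance, and then observe that the $\sigma$-algebra $\bB(C_0(\RR,V))$ is generated by increments so that ergodicity of the shift on $C_0(\RR,V)$ follows. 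Your proposal handles this adequately at the level of a sketch; a fully self-contained argument would spell out exactly this reduction to increments, which is the main technical content of \cite{GS11}.
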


The proof of this lemma can be found in \cite{MasSchm04} and  in \cite{GS11}. Indeed the previous result holds true no matter the value of the Hurst parameter in $(0,1)$.
\\

This (canonical) process has a version, denoted by $\omega$, which is ${\beta^{\prime}}$-H{\"o}lder continuous on any interval $[-k,k]$  for $\beta^{\prime}<H$, see Kunita \cite{Kunita90}, Theorem 1.4.1.

As we already did in Section \ref{ss2.2}, for $1/2<\beta<\beta^\prime<H$ such that $1-\beta^\prime < \beta < \alpha$, we denote by $\Omega \subset C_0(\RR,V)$ the set of functions which are $\beta^{\prime}$-H{\"o}lder continuous on any interval $[-k,k],\,k\in\NN$, and are zero at zero.

\begin{lemma}
We have $\Omega \in\bB(C_0(\RR,V))$  and $\PP(\Omega)=1$. In addition, $\Omega$ is $(\theta_t)_{t\in\RR}$-invariant.
\end{lemma}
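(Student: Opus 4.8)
The plan is to verify the three assertions in turn, all of which follow from fairly standard facts about Hölder spaces and Gaussian measures, together with the preceding lemmas. First I would establish measurability, i.e. $\Omega\in\bB(C_0(\RR,V))$. For fixed $k\in\NN$ and $n\in\NN$ consider the sets
\begin{equation*}
\Omega_{k,n}=\bigg\{\omega\in C_0(\RR,V):\ \sup_{\substack{s,t\in[-k,k]\cap\QQ\\ s\neq t}}\frac{|\omega(t)-\omega(s)|}{|t-s|^{\beta'}}\le n\bigg\}.
\end{equation*}
Because each of the maps $\omega\mapsto\omega(t)$ is continuous for the compact-open topology, and a countable supremum of continuous functions is Borel measurable, each $\Omega_{k,n}$ is closed (hence Borel) in $C_0(\RR,V)$; by continuity of paths the rational supremum coincides with the full supremum over $[-k,k]$. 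Then
\begin{equation*}
\Omega=\bigcap_{k\in\NN}\bigcup_{n\in\NN}\Omega_{k,n}\in\bB(C_0(\RR,V)).
\end{equation*}

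Next I would prove $\PP(\Omega)=1$. It suffices to show that for each fixed $k$, $\PP$-a.e.\ path is $\beta'$-Hölder continuous on $[-k,k]$, and then intersect over $k$. This is exactly the Kolmogorov continuity theorem applied to $B^H$: using $\EE|B^H(t)-B^H(s)|^{2m}=c_m(\mathrm{tr}\,Q)^m|t-s|^{2mH}$ (which follows from the covariance structure of the independent one–dimensional fBms $\beta^H_i$ and Gaussianity), one obtains a continuous modification that is $\gamma$-Hölder on $[-k,k]$ for every $\gamma<H-\frac{1}{2m}$; letting $m\to\infty$ gives Hölder continuity for every exponent $\beta'<H$, and in particular the chosen $\beta'$. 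This is precisely the statement quoted in the text from Kunita \cite{Kunita90}, Theorem 1.4.1, so I would simply cite it: the canonical process $\omega$ has a version that is $\beta'$-Hölder on each $[-k,k]$, hence $\PP(\Omega)=1$.

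Finally I would check $(\theta_t)_{t\in\RR}$-invariance of $\Omega$. Fix $\tau\in\RR$ and $\omega\in\Omega$; I must show $\theta_\tau\omega\in\Omega$. By \eqref{shift}, $\theta_\tau\omega(t)=\omega(t+\tau)-\omega(\tau)$, so $\theta_\tau\omega(0)=0$ and $\theta_\tau\omega\in C_0(\RR,V)$. For $s,t\in[-k,k]$ one has $s+\tau,t+\tau\in[-(k+|\tau|),k+|\tau|]$ and
\begin{equation*}
|\theta_\tau\omega(t)-\theta_\tau\omega(s)|=|\omega(t+\tau)-\omega(s+\tau)|\le |||\omega|||_{\beta',-(k+|\tau|),k+|\tau|}\,|t-s|^{\beta'},
\end{equation*}
so $\theta_\tau\omega$ is $\beta'$-Hölder on $[-k,k]$ for every $k$; hence $\theta_\tau\omega\in\Omega$. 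Since $\tau$ was arbitrary this gives $\theta_\tau\Omega\subseteq\Omega$ for all $\tau$, and applying this with $-\tau$ yields equality, so $\Omega$ is strictly $(\theta_t)$-invariant.

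None of these steps presents a serious obstacle; the only point requiring a little care is the measurability argument, where one must pass from the (uncountable) defining supremum over $[-k,k]$ to a countable one over rationals, which is legitimate precisely because the elements of $C_0(\RR,V)$ are continuous. Everything else is either a direct citation (Kolmogorov/Kunita for full measure) or an elementary shift computation (invariance).
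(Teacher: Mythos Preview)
Your proof is correct and follows essentially the same approach as the paper: measurability via the countable supremum over rationals and a $\bigcap_k$ representation, full measure via Kolmogorov/Kunita, and invariance by a direct shift computation. The paper's own proof is in fact terser---it only writes out the measurability argument and declares invariance ``straightforward,'' relying on the Kunita citation preceding the lemma for $\PP(\Omega)=1$---so your version is a more detailed rendering of the same ideas.
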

\begin{proof}
First we note that
\begin{equation*}
    C_0(\RR,V)\ni\omega\mapsto |||\omega|||_{\beta^{\prime},-k,k}\in\bar \RR^+=\RR^+\cup\{+\infty\}
\end{equation*}
is measurable. Indeed we have
\begin{equation*}
    |||\omega|||_{{\beta^{\prime}},-k,k}=\sup_{-k\le s<t\le k,s,t\in\QQ}\frac{|\omega(t)-\omega(s)|}{|t-s|^{\beta^{\prime}}}.
\end{equation*}
Then
\begin{equation*}
    \Omega=\bigcap_{k\in\NN}\{\omega\in C_0(\RR,V):|||\omega|||_{\beta^{\prime},-k,k}<\infty\}\in\bB(C_0(\RR,V)).
\end{equation*}
The $(\theta_t)_{t\in\RR}$-invariance of $\Omega$ is straightforward.
\end{proof}

In what follows we consider the ergodic metric dynamical system introduced above restricted to the set $\Omega$: let $\tilde{\mathcal F}$ be the trace-$\sigma$-algebra of $\mathcal F$ with respect to $\Omega$, let $\tilde \PP$ the restriction of $\PP$ to this $\sigma$-algebra, and $\theta$ represents the restriction of the Wiener shifts to $\Omega\times \RR$. Then $(\Omega, \tilde{\mathcal F}, \tilde{\PP}, \theta)$ forms a metric dynamical system such that for every $\tilde A \in \tilde{\mathcal F}$ and every $A\in \mathcal F$ with $\tilde A=A\bigcap \Omega$ we have that $\tilde {\PP} (\tilde A)=\PP (A)$ independent of the representation by $A$. In addition, the ergodicity of $(C_0(\RR,V),\bB(C_0(\RR,V)),\PP,\theta)$ is transferred to $(\Omega, \tilde{\mathcal F}, \tilde {\PP}, \theta)$, see \cite{CGSV10} for details.\\

Once we have the adequate ergodic metric dynamical system given by $(\Omega, \tilde{\mathcal F}, \tilde {\PP}, \theta)$, we can prove the main result of this section:

\begin{theorem}\label{t20}
Assume that the driving process $\omega$ of (\ref{eq7}) is a fractional Brownian motion with Hurst index greater than $1/2$. Under the conditions of Theorem \ref{t1}, for every $u_0\in V$ there exists a unique mild solution $u\in C^{\beta,\sim}([0,T];V)$, which generates a random dynamical system $\phi:\mathbb{R}^+\times \Omega\times V\to V$ defined by
\begin{equation*}
\phi(t,\omega,u_0)=S(t)u_{0}+\int_0^t
S(t-s)G(u(s))d\omega. \end{equation*}
\end{theorem}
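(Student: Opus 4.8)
The plan is to combine the pathwise existence and uniqueness result of Theorem \ref{t1}, the cocycle property established in Theorem \ref{t2}, and the measurability framework of the metric dynamical system $(\Omega,\tilde{\mathcal F},\tilde\PP,\theta)$ built above. Existence and uniqueness of the mild solution $u\in C^{\beta,\sim}([0,T];V)$ follows directly from Theorem \ref{t1}, since the driving path $\omega$ is (for $\tilde\PP$-almost every realization, in fact for every $\omega\in\Omega$) $\beta^\prime$-H\"older continuous on compact intervals with $1/2<\beta<\beta^\prime<H$; and the cocycle identity \eqref{cocycle} together with the continuity and compactness of $u_0\mapsto\phi(t,\omega,u_0)$ is exactly the content of Theorem \ref{t2}, which was proved for arbitrary $\omega\in\Omega$ without any use of probabilistic structure. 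Thus the only genuinely new point to verify is that $\phi$ is $\bB(\RR^+)\otimes\tilde{\mathcal F}\otimes\bB(V),\bB(V)$-measurable as a map $\RR^+\times\Omega\times V\to V$.

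First I would reduce the joint measurability of $\phi$ to separate continuity in $(t,u_0)$ and measurability in $\omega$: since for fixed $\omega$ the map $(t,u_0)\mapsto\phi(t,\omega,u_0)$ is continuous (the solution $u$ is continuous in $t$, and by the parameter-dependent Banach fixed point theorem it depends continuously on $u_0$, uniformly on bounded time intervals), it suffices by a standard Carath\'eodory-type argument to show that for each fixed $(t,u_0)$ the map $\omega\mapsto\phi(t,\omega,u_0)$ is $\tilde{\mathcal F}$-measurable; joint measurability then follows because a function continuous in one block of variables and measurable in the other is jointly measurable on a product of a Polish space with a measurable space. To obtain measurability in $\omega$, I would exhibit the fixed point $u=u(\cdot,\omega)$ as a pointwise (in $\omega$) limit of the Picard iterates $u^{(n+1)}=\tT(u^{(n)},\omega,u_0)$, starting from $u^{(0)}\equiv u_0$; the iteration converges in $C^{\beta,\rho,\sim}([0,T];V)$ for $\rho$ chosen large (depending measurably on $\omega$ through $|||\omega|||_{\beta^\prime,0,T}$, which is measurable as shown in the lemma above), hence a fortiori in $C([0,T];V)$ and in particular pointwise at time $t$. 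Since a pointwise limit of measurable maps into the separable Banach space $V$ is measurable, it remains only to check that each iterate $\omega\mapsto u^{(n)}(t,\omega)$ is $\tilde{\mathcal F}$-measurable, which I would do by induction: the base case $u^{(0)}\equiv u_0$ is trivial, and the induction step requires showing that $\omega\mapsto\int_0^tS(t-r)G(u^{(n)}(r,\omega))\,d\omega$ is measurable.

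The main obstacle, and the step I would spend the most care on, is precisely this measurability of the pathwise stochastic integral $\omega\mapsto\int_0^tS(t-r)G(v(r,\omega))\,d\omega$ when the integrand $v$ itself depends measurably on $\omega$. I would handle it by going back to the componentwise representation \eqref{eq36}: the integral equals $\sum_j\big(\sum_i\int_0^t D_{0+}^\alpha z_{ji}[r]\,D_{t-}^{1-\alpha}\zeta_{it-}[r]\,dr\big)e_j$, where $z_{ji}=(S(t-\cdot)G(v(\cdot))E_{ji})_{L_2(V)}$ and $\zeta_{it-}(r)=(\omega(r)-\omega(t),e_i)$. For each fixed $r$ and $\omega$, the Weyl fractional derivatives $D_{0+}^\alpha z_{ji}[r]$ and $D_{t-}^{1-\alpha}\zeta_{it-}[r]$ are explicit Lebesgue integrals in the auxiliary variable $q$ of quantities that depend continuously (hence measurably) on the evaluations $\omega(\cdot)$ and on $v(\cdot,\omega)$; one approximates the $q$-integrals by Riemann sums over rational partitions, which are finite sums of measurable functions, and passes to the limit, using the uniform bounds \eqref{DG} and \eqref{Do} (equivalently the estimates in Lemma \ref{l3}) to justify interchanging limit and integration via dominated convergence and to guarantee the $\sum_i,\sum_j$ series converge in $V$. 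This gives measurability of $\omega\mapsto\int_0^tS(t-r)G(v(r,\omega))\,d\omega$ for measurable $v$, closing the induction; combined with the continuity in $(t,u_0)$ this yields the claimed joint measurability, so $\phi$ is a random dynamical system over the ergodic metric dynamical system $(\Omega,\tilde{\mathcal F},\tilde\PP,\theta)$.
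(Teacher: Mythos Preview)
Your proposal is correct and follows essentially the same approach as the paper: invoke Theorem \ref{t1} for existence/uniqueness, Theorem \ref{t2} for the cocycle property, and then establish the joint measurability of $\phi$ via Picard iteration together with continuous dependence on the initial condition. The paper's proof is terser---it simply asserts that the iterates are measurable and then cites Lemma III.14 of Castaing--Valadier \cite{CasVal77} (the Carath\'eodory-function lemma you allude to) to pass from separate regularity to joint measurability---whereas you supply the extra detail of why each iterate is $\tilde{\mathcal F}$-measurable by unpacking the componentwise representation \eqref{eq36} of the integral; this elaboration is sound and fills in what the paper leaves implicit.
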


\begin{proof}
Having identified $\omega$ as an fBm with Hurst index greater than $H>1/2$, the existence of a unique pathwise mild solution to (\ref{eq7}) follows by the $\beta^{\prime}$-H\"older regularity of $\omega$ and Theorem \ref{t1}.

Furthermore, the pathwise definition of the integral just gave us the non--autonomous dynamical system $\varphi$ studied in Section \ref{s3}. Therefore, all we have to do is to establish the proper measurability conditions for the mapping $\varphi$, in that case the $\bB(\RR^+)\otimes \tilde {\fF} \otimes \bB(V),\bB(V)$ measurability, according to the definition introduced in Subsection \ref{s2}. It suffices to observe that, when starting the iteration procedure of the Banach fixed point theorem with a measurable initial function $u_0$, then $\phi(t,\omega,u_0)$ is a pointwise limit of measurable mappings. Moreover, the parameter version of the fixed point theorem for fixed $(t,\omega)$ ensures that the fixed point depends continuously on $u_0\in V$. These last two considerations together with Lemma
III.14 in \cite{CasVal77} allows to claim that $\phi(t,\omega,u_0)$ is measurable with respect to its three variables.
\end{proof}

\begin{remark}
Let us briefly comment what we could do in case of having a more regular initial condition, assuming $u_0\in V_\beta$. In that case we could use as phase space $C^{\beta}([0,T];V)$, since in this situation we would not need to consider the modification of this space given by $C^{\beta,\sim}([0,T];V)$. In addition, due to the regularizing effect of the  equation, the unique pathwise mild solution $u\in C^{\beta}([0,T];V)$ to (\ref{eq7}) would satisfy that $u(t)\in V_\beta$ for $t\in (0,T]$, see Remark \ref{r1}. Thus, when having such a more regular initial datum, we could prove the existence of a random dynamical system $\phi:\mathbb{R}^+\times \Omega\times V_\beta \to V_\beta$ defined by the corresponding solution to the stochastic evolution equation.
\end{remark}

In a forthcoming paper, see \cite{ChGGSch13}, we will deal with the study of the asymptotic behavior of the pathwise mild solution to (\ref{eq7}) by analyzing the existence and uniqueness of the pullback attractor (or random attractor when $\omega$ is a fBm with Hurst index greater than $1/2$) associated to the non-autonomous dynamical system $\varphi$ (random dynamical system, respectively).


\begin{thebibliography}{99}

\bibitem{abraste} M. Abramowitz and I.A. Stegun, \newblock {\em Handbook of mathematical functions with formulas, graphs, and mathematical tables}. New York: Dover, 1972.

\bibitem{Arn98}
L. Arnold.
\newblock {\em Random dynamical systems}.
\newblock Springer Monographs in Mathematics. Springer-Verlag, Berlin, 1998.


\bibitem{CGSV10}  T. Caraballo, M.J. Garrido-Atienza,
B.~Schmalfu{\ss} and J. Valero, Asymptotic behavior of a stochastic semilinear dissipative functional equation without uniqueness of solutions, \newblock {\em Discrete and continuous dynamical systems, series B}, 14(2): 439-455, 2010.

\bibitem{CasVal77}
C.~Castaing and M.~Valadier.
\newblock {\em Convex analysis and measurable multifunctions}.
\newblock Springer-Verlag, Berlin, 1977.
\newblock Lecture Notes in Mathematics, Vol. 580.


\bibitem{FV10}
P. Friz and N. Victoir. Multidimensional Stochastic Processes as Rough Paths. Theory and Applications. Cambridge Studies
of Advanced Mathematics Vol. 120. Cambridge University Press, 2010.


\bibitem{GLS09} M.J. Garrido-Atienza, K. Lu and
B.~Schmalfuss, Random dynamical systems for stochastic partial
differential equations driven by a fractional Brownian motion,
\newblock {\em Discrete and continuous dynamical systems, series B}, 14(2): 473--493, 2010.

\bibitem{Gar12} M.J. Garrido-Atienza, K. Lu and
B.~Schmalfu\ss, Random dynamical systems for stochastic evolution equations driven by a fractional {B}rownian motion
with Hurst parameter in (1/3,1/2],
\newblock {\em in preparation}.

\bibitem{GLS12a} M.J. Garrido-Atienza, K. Lu and
B.~Schmalfu\ss, Pathwise solutions of stochastic partial differential equations driven by a fractional {B}rownian motion
with Hurst parameter in (1/3,1/2],
\newblock {\em arXiv:1205.6735}.

\bibitem{GLS12b} M.J. Garrido-Atienza, K. Lu and
B.~Schmalfu\ss, Compensated fractional derivatives and stochastic evolution equations,
{\it Comptes Rendus Math\'ematique},  350(23--24): 1037--1042, 2012.




\bibitem{GMS08} M.J. Garrido-Atienza, B. Maslowski and
B.~Schmalfu\ss, Random attractors for stochastic equations driven
by a fractional Brownian motion, \newblock {\em
International Journal of Bifurcation and Chaos}, 20(9): 1-22, 2010.

\bibitem{GS11} M.J. Garrido-Atienza and B. Schmalfu{\ss}, Ergodicity of the infinite dimensional fractional Brownian motion, \newblock {\em Journal of  Dynamics and Differential Equations}, 23: 671-681, 2011.

\bibitem{ChGGSch13}
M.~J. Garrido-Atienza and B. Schmalfu\ss, Random attractors for SPDEs driven by a fractional Brownian motion, {\em submitted}.

\bibitem{GrAnh}
W. Grecksch and V. V. Anh, A parabolic stochastic differential equation with fractional Brownian motion input, {\em Statist. Probab. Lett.}, 41: 337--345, 1999.

\bibitem{GuLeTin}
M. Gubinelli, A. Lejay and S. Tindel, Young integrals and SPDEs, {\em Potential Anal. An Inter-national Journal Devoted to the Interactions between Potential Theory, Probability Theory, Geometry and Functional Analysis}, 25: 307--326, 2006.


\bibitem{Kunita90}
H.~Kunita.
\newblock {\em Stochastic flows and stochastic differential equations}.
\newblock Cambridge University Press, 1990.

\bibitem{Lyons}
T. Lyons and Z. Qian, System control and rough paths. Oxford Mathematical Monographs, Oxford Science Publications.
Oxford University Press, Oxford, (2002).

\bibitem{MasNua03}
B. Maslowski and D. Nualart.
\newblock Evolution equations driven by a fractional {B}rownian motion.
\newblock {\em J. Funct. Anal.}, 202(1): 277--305, 2003.

\bibitem{MasSchm04}
B. Maslowski and B. Schmalfu{\ss},
\newblock Random dynamical systems and stationary solutions of differential
  equations driven by the fractional {B}rownian motion.
\newblock {\em Stochastic Anal. Appl.}, 22(6): 1577--1607, 2004.

\bibitem{NuaRas02}
D. Nualart and A. R{\u{a}}{\c{s}}canu.
\newblock Differential equations driven by fractional {B}rownian motion.
\newblock {\em Collect. Math.}, 53(1): 55--81, 2002.

\bibitem{Samko}
S.G. Samko, A.A. Kilbas, O.I. Marichev.
\newblock {\em Fractional integrals and derivatives: theory and applications}.
\newblock Gordon and Breach Science Publishers (Switzerland and Philadelphia, Pa., USA), 1993.


\bibitem{TinTuVi}
S. Tindel, C. Tudor and F. Viens, Stochastic evolution equations with fractional Brownian motion, {\em Probability Theory and Related Fields}, 127: 186--204, 2003.

\bibitem{You36} L.C. Young, An integration of H{\"o}der type, connected with Stieltjes integration,
\newblock {\em Acta Math.}, 67: 251--282, 1936.


\bibitem{Zah98}
M.~Z{\"a}hle.
\newblock Integration with respect to fractal functions and stochastic
  calculus. {I}.
\newblock {\em Probab. Theory Related Fields}, 111(3): 333--374, 1998.




\end{thebibliography}
\end{document}